\newcommand{\bu}{\boldsymbol{u}}
\newcommand{\bv}{\boldsymbol{v}}
\newcommand{\bq}{\boldsymbol{q}}
\renewcommand{\div}{\operatorname{div}}
\newcommand{\curl}{\operatorname{curl}}
\newcommand{\bB}{\boldsymbol{\operatorname{ B}}}
\newcommand{\R}{\mathbb{R}}
\renewcommand{\SS}{\mathbb{S}}
\newcommand{\dx}{\,\textup{d}x}
\newcommand{\dt}{\,\textup{d}t}
\newcommand{\dsig}{\,\textup{d}\sigma}
\newcommand{\pdt}{\partial_t}
\newcommand{\ddt}{\frac{\textup{d}}{\textup{d}t}}
\newtheorem{theorem}{Theorem}
\newtheorem{lemma}{Lemma}
\newtheorem{proposition}{Proposition}
\newtheorem*{assumption*}{Assumptions}
\newtheorem{assumption}{Assumption}
\newtheorem{remark}{Remark}
\numberwithin{lemma}{section}
\numberwithin{proposition}{section}
\numberwithin{theorem}{section}
\numberwithin{equation}{section}
\begin{document}

\title{{$L^{p}$--$L^{q}$} existence for the open compressible MHD system. }
\date{}
\author{Mostafa Meliani}

\maketitle
\medskip

\centerline{Institute of Mathematics of the Academy of Sciences of the Czech Republic}

\centerline{\v Zitn\' a 25, CZ-115 67 Praha 1, Czech Republic}

	\begin{abstract}
		\noindent We study the local existence of solutions to the Navier--Stokes--Fourier-magnetohydrodynamics (NSF-MHD) system describing the motion of a compressible, viscous, electrically and heat conducting fluid in the $L^p$--$L^q$ class with inhomogeneous boundary conditions. The open system is allowed to receive incoming matter from the outside through (part of) the boundary which we refer to as an inflow boundary. This setup brings about a difficulty in estimating the regularity of the density $\varrho$ which we remedy by assuming appropriate hypotheses on the velocity field, domain boundary and on the boundary and initial data of $\varrho$. The main result ensures the local well-posedness of the full NSF-MHD system which is shown through a linearization combined with a Banach fixed-point theorem.
	\end{abstract}

\noindent {\bf 2020 Mathematics Subject Classification}: 35Q30, 35Q35, 76N06, 76N10\\[0mm]

\noindent {\bf Keywords:} Navier--Stokes--Fourier-MHD system, initial-boundary value problem, local existence, strong solutions
\tableofcontents

\section{Introduction}
\subsection{Motivation}
There exist by now a wide literature on the existence of strong solutions to equations of compressible isothermal and non-isothermal fluids, including in the case where they are coupled with magnetohydrodynamics; see, e.g.,~\cite{cho2006existence,kagei2006,tang2016strong,valli1982existence} and the references contained therein. There exist, on the other hand, little literature on strong solutions to Navier--Stokes system in the case where there is matter incoming from the exterior; i.e., when the velocity is allowed to be pointing inwards on the boundary ($\bu\cdot n<0$). 
The main difficulty in this case resides in controlling the gradient of the density. The only available results we are aware of are due to Zajaczkowski and Fiszdon~\cite{fiszdon1983initial} in the isothermal case and to Valli and Zajaczkowski~\cite{valli1986navier} in the non-isothermal one. In both aforementioned references, the authors dealt with the existence of strong solutions in a Hilbertian (classical) framework where an inflow boundary is allowed.

We note that in the case where no matter is incoming from the outside into the domain ($\bu \cdot n \geq 0$ on the boundary), the literature is richer and we refer the reader to, e.g., \cite{abbatiello2024local, danchin2001local,danchin2010solvability,kotschote2012strong,solonnikov1980solvability,tang2016local} for local $L^p$--$L^p$ and $L^p$--$L^q$ existence results. We mention here also the following works establishing the existence of $\mathcal{R}$-bounded solution operators and proving the local well-posedness in the maximal $L^p$--$L^q$ regularity class of the two-phase incompressible MHD problem~\cite{frolova2022local,frolova2022maximal}.

We propose in this work to fill this gap in the literature by building an $L^p$--$L^q$ framework to study the full non-isothermal Navier--Stokes system coupled to a magnetohydrodynamics model. The novelty is then twofold: First, establishing the existence and uniqueness of strong solution to an open Navier-Stokes system in the $L^p$--$L^q$ setting. Second, the coupling with equations of evolution of temperature and magnetic field, showing the well-posedness of the complete system.

Our motivation behind this work lies in the fact that the $L^p$--$L^q$ paradigm offers the advantage of allowing more flexibility vis-a-vis, e.g., Sobolev embeddings. Indeed, compared to the Hilbertian (classical) setting, we do not necessarily need to increase the differentiabilty of solutions but rather it may suffice to ask for better $L^q$-integrability. This, in turn, offers the advantage of being able to treat less regular domains (boundary of class $C^2$ in this work compared to $C^3$ in \cite{fiszdon1983initial,valli1986navier} and $C^4$ in \cite{valli1982existence}) and reducing the number of compatibility conditions to be imposed.

Another motivation behind our work lies in our interest in the study of blow-up criteria for the open NSF-MHD system. Indeed, there exist a number of results exploring conditional regularity
in the no-inflow case; see, e.g., \cite{abbatiello2024local,basaric2023conditional,fang2012blow, feireisl2025conditional} and the references contained therein. However, blow-up criteria are not known for the case $\bu\cdot n \not \geq 0$ on the boundary. We believe that the flexibility offered by the $L^p$--$L^q$ setting can help in establishing blow-up criteria in the case of inflow boundary by removing the need for some of the higher-differentiability arguments in, e.g.,~\cite{basaric2023conditional,fang2012blow}. This question will be the subject of study in an upcoming work.

Besides establishing local existence and uniqueness for the NSF-MHD system, this work also deals with minimum and maximum principles for the mass density and temperature in the inflow case which is of physical relevance. I.e., we show that the temperature and density stay positive (physical) as long as the boundary and initial data of the aforementioned quantities are positive and the velocity field is regular enough.

To prove the local solvability of the full system, we first establish $L^p$--$L^q$ estimates of the solutions for a set of linearized equations. We then apply a Banach fixed-point argument. The linearized system is divided into four independent initial-boundary value problems: a linearized continuity equation of hyperbolic nature, and three linearized problems of parabolic nature: one for each of the velocity, temperature and magnetic fields. The Banach fixed-point argument ensures the unique solvability of the coupled nonlinear system. We mention here that since the system of equation is of hyperbolic-parabolic nature, contractivity is only shown in a lower topology. This is due to the well-identified problem of loss of regularity for such systems which prevents the establishing of contractivity estimates in the full space of solutions; see, e.g., \cite{kotschote2012strong_allen-cahn,kotschote2012strong}.

The estimates for the mass density $\varrho$ are obtained by studying the continuity equation for fixed velocity $\bv$. In particular, obtaining estimates for $\nabla_x\varrho$ is made difficult because of the inflow boundary. Indeed, multiplying the differentiated continuity equation by $q|\nabla_x \varrho|^{q-2}\nabla_x \varrho$, gives rise to the term $-\int_{\partial \Omega} \bv \cdot n|\nabla_x \varrho|^q \dsig$ as a result of integration by parts of the term $\bv\cdot \nabla_x\varrho$ in the continuity equation. This boundary term can be omitted for non-slip or outflow boundaries; on the other hand, it should be controlled by the boundary data $\varrho_B$ in the case of the inflow boundary, see the proof of Lemma~\ref{lemma::est_der}. The estimates for the velocity, temperature, and magnetic fields follow from the  $L^p$--$L^q$ theory for the parabolic equations established by Denk, Hieber, and Pr\"uss~\cite{denk2007optimal}.

\subsection{Organization of the paper} The current work is organized as follows: After introducing some necessary notation, we present the system of equations to be studied in Section~\ref{Sec:sys_eq}. In Section \ref{Sec:Main_Result}, we give the assumptions and main result of this work which is contained in Theorem~\ref{theorem:main_thm}. Section~\ref{Sec:MinMax} deals with minimum and maximum principles for the temperature and density. Section~\ref{Sec:Linearized} is devoted to studying a linearized system of equations. The proof of the main Theorem~\ref{theorem:main_thm} is given in Section~\ref{Sec:Main_proof} through a Banach fixed-point argument. Appendix~\ref{App:Appendix} contains technical lemmas used in the proof of Theorem~\ref{theorem:main_thm}. 

\subsection{Notation and functional setting}\label{sec:notation}
Of interest to this work will be the Besov and Triebel--Lizorkin spaces, $B^s_{pq}$ and $F^s_{qp}$, respectively, with $s \in \R$ and $p,q \in (1,\infty)$ . Their relevance resides in the fact that they generalize the classical Sobolev and Sobolev--Slobodeckii spaces $W^{s,p}$. Indeed, we recall that
\begin{equation}
	\begin{aligned}
		W^{s,p} = B^s_{pp}, && \qquad &\textrm{if $s$ noninteger}, \\ 
		W^{m,p} = F^m_{p2}, && \qquad &\textrm{if $m$ integer}. \\ 
	\end{aligned}
\end{equation}
Besov spaces also play an important role as interpolation spaces between Sobolev spaces; see, e.g., \cite[Chapter 6]{bergh2012interpolation}. For convenience of the reader, we recall the definition of the spaces $B^s_{pq}$ and $F^s_{pq}$~\cite[Section 2.3]{triebel1983theory}:
\begin{equation}
	\begin{aligned}
		&B^s_{pq}(\R^n) = \{f \in S'(\R^n) \,| \, \|2^{sj} F^{-1} \phi_j F f\|_{l_q(L^p(\R^n))} < \infty\},\\
		&F^s_{pq}(\R^n) = \{f \in S'(\R^n) \,| \, \|2^{sj} F^{-1} \phi_j F f\|_{L_p(\R^n,\, l_q)} < \infty\},
	\end{aligned}
\end{equation}
where $ S'(\R^n)$ is the set of all tempered distributions on $\R^n$, $F$ and $F^{-1}$ are, respectively, the Fourier and inverse Fourier transforms. The family $\{\phi_j\}_{j\in \mathbb{N}}$ is composed of smooth compactly supported functions such that $\sum_{j=1}^\infty \phi_j =1$ designed to capture the behavior of the function $f$ on different frequency stripes. For the precise assumptions on the cut-off functions $\{\phi_j\}_{j\in \mathbb{N}}$, we refer the reader to \cite[Section 2.3.1]{triebel1983theory}. 

The Besov and Triebel-Lizorkin spaces on a bounded domain $\Omega$ are then defined as the set of restrictions
\begin{equation}
	\begin{aligned}
		&B^s_{pq}(\Omega) = \{f \,|\, \exists g \in B^s_{pq}(\R^n) \textrm{ with } \, g|_{\Omega} =f\},\\
		&F^s_{pq}(\Omega) = \{f \,|\, \exists g \in F^s_{pq}(\R^n) \textrm{ with } \, g|_{\Omega} =f\},
	\end{aligned}
\end{equation}
see, e.g., \cite[Section 3.2]{triebel1983theory}. The question of inner description (and related question of existence of extension operators~\cite{farkas2001sobolev}) of the spaces $B^s_{pq}(\Omega)$ and $F^s_{pq}(\Omega)$ is a delicate one for the general case, in particular for $0<p
\leq 1$. Nevertheless, for the case which is relevant in this work $(1<p<\infty)$, extension operators are available for $B^s_{pq}(\Omega)$ and $F^s_{pq}(\Omega)$, see, e.g.,~\cite{devore1993besov,seeger1989note}, as long as the domain $\Omega$ is of $(\varepsilon,\delta)$ type~\cite{jones1981quasiconformal}. This is in particular the case if $\Omega$ is Lipschitz~\cite[p. 73]{jones1981quasiconformal}. 

To simplify the presentation below, we introduce some short-hand notations. We denote the space of solutions for second-order parabolic equations by
\begin{equation}
	L_{p,q}^2(0,T) := L^p(0,T; W^{2,q}(\Omega)) \cap W^{1,p}(0,T;L^q(\Omega)).
\end{equation}
Similarly, the space
\begin{equation}
	L_{\infty,q}^1(0,T) := L^\infty(0,T; W^{1,q}(\Omega)) \cap W^{1,\infty}(0,T;L^q(\Omega)),
\end{equation}
will be the target solution space for the mass density $\varrho$. 

\section{The system of equations}\label{Sec:sys_eq}
On a bounded domain $\Omega \subset \R^3$, we consider the system of equations of NSF-MHD, which couples magnetic induction, see, e.g.,~\cite[Eqs. (2.2) \& (2.11)]{weiss2014magnetoconvection} to the Navier--Stokes--Fourier system: 
\begin{equation}\label{sys:raw}
	\begin{aligned}
		\pdt \varrho +\div_x(\varrho\bu) &= 0, \\
		\pdt (\varrho \bu) + \div_x (\varrho\bu\otimes\bu) + \nabla_x p(\varrho, \theta) &= \div_x \SS(\bu) + \varrho \nabla_x G + \curl_x \bB \times \bB ,\\
		\pdt \bB +\curl_x (\bB \times \bu)  + \curl_x(\xi  \curl_x \bB) &= 0,\\
		\div_x\bB &=0,\\
		\partial_t(\varrho e(\varrho,\theta)) + \div_x (\varrho e(\varrho,\theta) \bu) +\div_x \bq   & =  \mathbb{S} : \mathbb D_x \bu + \xi |\curl_x \bB|^2- p (\varrho,\theta) \div_x \bu,
	\end{aligned}
\end{equation}
with 
\begin{equation}\label{eq::viscosity_thing}
	\SS(\bu) =  \mu (2 \mathbb{D}_x\bu - \frac2d \div_x \bu \mathbb I)+ \lambda \div_x \bu \mathbb I,
\end{equation}
where  $\mu>0$ is the shear viscosity coefficient, $\lambda\geq 0$, the bulk viscosity coefficient, $\mathbb{D}_x \bu =\frac12 \nabla_x \bu +\frac12 \nabla_x\bu^t$ is the symmetrized gradient, and $\mathbb{I}$, the identity matrix in $\R^3$. 
Above, $\varrho$ is the mass density, $\bu$, the velocity, $p$, the pressure, $\theta$, the temperature, and $\bB$, the magnetic field. The coefficient $\xi$ is the magnetic resistivity and is a medium-dependent constant.

We further assume the Boyle-Mariotte equation of state which relates the pressure $p$ to the density $\varrho$ and temperature $\theta$:
\begin{equation}\label{eos:Boyle_Mariotte}
	p (\varrho,\theta) = \varrho \theta, \quad e = c_v \theta, \quad c_v>0,
\end{equation}
and the Fourier law of heat conduction:
\begin{equation}
	\bq = -\kappa \nabla_x \theta,
\end{equation}
where $\kappa>0$ is the heat conductivity of the medium, assumed to be constant, and $\bq$ a heat flux resulting from the temperature gradient $\nabla_x \theta$.

On the velocity field we impose the inhomogeneous Dirichlet boundary conditions:
\begin{equation}
	\begin{aligned}\label{BC:velocity}
		\bu|_{\partial \Omega} = \bu_B  \quad \textrm{on }[0,T]\times \partial\Omega.
	\end{aligned}
\end{equation}
We define the inflow, outflow, and wall subsets of $[0,T]\times \partial\Omega$ as
\begin{equation}
	\begin{aligned}
		\Sigma_{\textup{in}} & = \{(t,x) \in [0,T]\times \partial\Omega | \bu_B (t,x) \cdot n<0\},\\
		\Sigma_{\textup{out}} & = \{(t,x) \in [0,T]\times \partial\Omega | \bu_B (t,x) \cdot n>0\},\\
		\Sigma_{\textup{wall}} & = \{(t,x) \in [0,T]\times \partial\Omega | \bu_B (t,x) \cdot n=0\}.
	\end{aligned}
\end{equation}
To simplify the presentation, we assume that $\Sigma_{\textup{in}} = [0,T] \times \Gamma_{\textup{in}}$, i.e, we assume that the inflow boundary $\Gamma_{\textup{in}}$ is time-independent.

As is well-known, we need to prescribe the mass density on the inflow part of the boundary
\begin{equation}\label{BC:density}
	\varrho = \varrho_B>0 \quad \textup{ on } [0,T]\times\Gamma_{\textup{in}}.
\end{equation}	
We also prescribe Dirichlet boundary conditions for the magnetic and temperature fields:
\begin{equation}\label{BC:magneticfield}
	\begin{aligned}
		\bB\times n = b_1 \quad \textup{ on } [0,T]\times\partial \Omega, \\
		\theta = \theta_B>0 \quad \textup{ on } [0,T]\times\partial \Omega.
		\end{aligned}
\end{equation}
We additionally impose initial data 
\begin{equation}\label{Initial_data}
	\begin{aligned}
		\varrho(0) & =\varrho_0,\\
		\bu(0) &= \bu_0,\\
		\theta(0) & =\theta_0,\\
		\bB(0) &= \bB_0
	\end{aligned}
\end{equation}
to be chosen in suitable function spaces which will be made precise later in the analysis.

Using the EOS~\eqref{eos:Boyle_Mariotte}, we reformulate the system of equations~\eqref{sys:raw} as a closed system for the quantities $\varrho$, $\bu$, $\bB$, and  $\theta$:
\begin{equation}\label{eq:energy_equation_theta}
	\begin{aligned}
		&\pdt \varrho + \bu \cdot \nabla_x \varrho = -\varrho \div_x \bu,
		\\
		&\pdt \bu + \bu\cdot \nabla_x\bu =  \frac1{\varrho} [\div_x (\SS(\bu))+\curl_x\bB \times \bB]-\nabla_x \theta - \theta \nabla_x \log(\varrho) + \nabla_x G,
		\\
		& \pdt \bB   + \curl_x(\xi  \curl_x \bB)= \curl_x( \bu\times \bB), 
		\\
		& \div_x \bB = 0,
		\\
		&\pdt \theta + \bu\cdot \nabla_x\theta - \frac{\kappa}{\varrho c_v} \Delta_x \theta = \frac{1}{\varrho c_v} \SS(\bu):\mathbb{D}_x \bu + \frac{\xi}{\varrho c_v} |\curl_x \bB|^2 - \frac{\theta}{c_v} \div_x \bu .
	\end{aligned}
\end{equation}

\section{Main result: Theorem~\ref{theorem:main_thm}}\label{Sec:Main_Result}
The main result of this work is to show the local well-posedness of the open NSF-MHD system in the $L^p$--$L^q$ class, i.e., in the case where we allow an inflow boundary $\Gamma_{\textup{in}} \neq \emptyset$.

We begin by presenting the assumptions on the domain and initial and boundary data. The assumption on the inflow part of the boundary is justified by the discussion preceding Lemma~\ref{lemma::est_der}.
\begin{assumption}[Assumptions on the domain]\label{assu:domain}
	Let $\Omega$ be a bounded domain of $\R^d$ ($d=3$) with boundary $\partial \Omega$ of class at least $C^2$. We further assume that the inflow part of the boundary $\Gamma_{\textup{in}}$ is a closed surface.
\end{assumption}

\begin{remark}[On the domain regularity]
	The domain regularity assumption satisfies a number of requirements needed for the analysis. First, we have sufficient regularity to have inner descriptions and extension operators for the Sobolev, Besov and Triebel--Lizorkin spaces involved in the analysis (see discussion in Section~\ref{sec:notation}). Second, we ensure the needed regularity to apply the parabolic regularity estimates, which is a key ingredient in the analysis~\cite[Theorem 2.3]{denk2007optimal}.  
\end{remark}

\begin{assumption}[Assumptions on the data]\label{assu:data}

	Let $p,q \in (1,\infty)$. Let $T>0$. We assume the initial and boundary data
 	for the density satisfy
	\begin{equation}\label{data_density}
		\begin{aligned}
			&\varrho_B>0 \in W^{1,q}(0,T; L^q(\Gamma_\textup{in})) \cap L^{q}(0,T; W^{1, q}(\Gamma_\textup{in})),\\
			&\varrho_0>0 \in W^{1,q}(\Omega),\\
			& \varrho_0(x) = \varrho_B(x,0)  \qquad \ \textrm{for a.e.}\ x \in \Gamma_{\textup{in}},
		\end{aligned}
	\end{equation}
	for the velocity filed,
		\begin{equation}\label{data_velocity}
		\begin{aligned}
			&\bu_0 \in B^{2(1-1/p)}_{qp}(\Omega),\\
			&\bu_B \in F^{1-1/2q}_{pq}(0,T;L^q(\partial\Omega)) \cap L^p (0,T; W^{2-1/q,q}(\partial \Omega)),\\
			& \bu_0(x) = \bu_B(x,0)  \qquad \ \textrm{for a.e.}\ x \in \partial \Omega,\\
			& - \bu_B \cdot n\geq c >0   \qquad \ \textrm{for some }\ c>0 \ \textrm{ on } [0,T]\times \Gamma_{\textup{in}},
		\end{aligned}
	\end{equation}
	for the temperature field,
	\begin{equation}\label{data_temp}
		\begin{aligned}
			&\theta_0 \in B^{2(1-1/p)}_{qp}(\Omega),\\
			&\theta_B \in F^{1-1/2q}_{pq}(0,T;L^q(\partial\Omega)) \cap L^p (0,T; W^{2-1/q,q}(\partial\Omega)),\\
			& \theta_0(x) = \theta_B(x,0) \quad \textrm{for a.e.}\quad x \in \partial\Omega,
		\end{aligned}
	\end{equation}
	and for magnetic field,
	 \begin{equation}\label{data_mag}
		\begin{aligned}
			&\bB_0 \in B^{2(1-1/p)}_{qp}(\Omega),\\
			& b_1 \in F^{1-1/2q}_{pq}(0,T;L^q(\partial\Omega)) \cap L^p (0,T; W^{2-1/q,q}(\partial\Omega)),\\
			& \bB_0(x)\times n = b_1(x,0) \quad \textrm{for a.e.}\quad x \in \partial\Omega,
			& \div_x\bB_0 = 0 \quad \textrm{a.e. on}\quad x \in \partial\Omega.
		\end{aligned}
	\end{equation}
	Additionally, we assume that the gravity field $G \in L^p(0,T; W^{1,q}(\Omega))$.
\end{assumption}

Having introduced the assumptions made on the domain $\Omega$ and initial-boundary data, we are ready to present the main theorem of this work:
\begin{framed}
\begin{theorem}~\label{theorem:main_thm}
		Let $p,q \in (1,\infty)$, such that $q>d$ and $\max\{\frac{2q}{q-1}, \frac{2q}{2q-d}\}<p$. Let Assumptions~\ref{assu:domain} and \ref{assu:data} hold.
		
		 Then, there exists a strictly positive final time $T_*>0$ such that the system \eqref{eq:energy_equation_theta} supplemented with the boundary conditions~\eqref{BC:velocity}--\eqref{BC:magneticfield} and the initial conditions~\eqref{Initial_data} has a unique solution:
		\[(\varrho, \bu ,\theta,\bB) \in L^1_{\infty,q}(0,T_*) \times L^2_{p,q}(0,T_*) \times L^2_{p,q}(0,T_*) \times L^2_{p,q}(0,T_*). \]
\end{theorem}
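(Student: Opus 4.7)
The plan is a standard linearization-plus-Banach-fixed-point strategy built on top of the linear theory to be developed in Section~\ref{Sec:Linearized} and the positivity results of Section~\ref{Sec:MinMax}. I would first fix a candidate $(\tilde\varrho,\tilde\bu,\tilde\theta,\tilde\bB)$ in a closed ball $\mathcal{B}_{T_*,R}$ of the product space $L^1_{\infty,q}(0,T_*)\times L^2_{p,q}(0,T_*)^3$, subject additionally to uniform pointwise positivity $\tilde\varrho \geq \underline\varrho/2 > 0$, $\tilde\theta \geq \underline\theta/2 > 0$ (surviving in the image by the minimum principles of Section~\ref{Sec:MinMax} provided $T_*$ is small). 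The fixed-point map $\mathcal{T}$ sends this input to the solution of the linearized system obtained by freezing the nonlinear coefficients: the density solves the linear transport problem
\[
\pdt \varrho + \tilde\bu \cdot \nabla_x \varrho = -\tilde\varrho\,\div_x\tilde\bu, \qquad \varrho|_{\Gamma_{\textup{in}}}=\varrho_B, \qquad \varrho(0)=\varrho_0,
\]
with transport field $\tilde\bu$ pointing strictly inward on $\Gamma_{\textup{in}}$ (by Assumption~\ref{assu:data} and short-time continuity in $\tilde\bu$), while $\bu$, $\theta$ and $\bB$ solve the variable-coefficient parabolic problems of Lamé, heat, and curl-curl type (with coefficients $1/\tilde\varrho$, $1/(\tilde\varrho c_v)$, and $\xi$), completed by the prescribed Dirichlet-type boundary conditions~\eqref{BC:velocity}--\eqref{BC:magneticfield} and initial data~\eqref{Initial_data}.

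Second, I would invoke the maximal $L^p$-$L^q$ regularity estimates for each of these linearized subproblems (the content of Section~\ref{Sec:Linearized}) to ensure that $\mathcal{T}$ is well defined on $\mathcal{B}_{T_*,R}$. To show $\mathcal{T}(\mathcal{B}_{T_*,R}) \subset \mathcal{B}_{T_*,R}$, one chooses $R$ depending only on the norms of the initial and boundary data, and then picks $T_*$ small enough to absorb every term on the right-hand sides of the linear estimates that carries a vanishing factor $T^\alpha$ coming from H\"older-in-time or temporal trace embeddings. At this stage the two inequalities $q>d$ and $p>\max\{2q/(q-1),2q/(2q-d)\}$ enter crucially: the former yields $W^{1,q}(\Omega) \hookrightarrow L^\infty(\Omega)$, controlling the quadratic source terms $\SS(\tilde\bu):\mathbb{D}_x\tilde\bu$, $\xi|\curl_x\tilde\bB|^2$, as well as products like $\tilde\theta\nabla_x\log\tilde\varrho$, while the latter guarantees that the boundary traces of $\bu,\theta,\bB$ live in the correct Besov--Triebel--Lizorkin spaces $F^{1-1/2q}_{pq}(0,T_*;L^q(\partial\Omega))\cap L^p(0,T_*;W^{2-1/q,q}(\partial\Omega))$ required by the compatibility conditions in Assumption~\ref{assu:data}.

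Third, I would establish that $\mathcal{T}$ is a strict contraction on $\mathcal{B}_{T_*,R}$, possibly working in a weaker norm and recovering full-norm uniqueness a posteriori. Writing the equations for the differences $(\delta\varrho,\delta\bu,\delta\theta,\delta\bB)$ of two images, every nonlinear term splits into a \emph{coefficient $\times$ difference} part, killed by choosing $T_*$ small, and a \emph{difference of coefficients $\times$ fixed function} part, handled using that $\tilde\varrho\mapsto 1/\tilde\varrho$ is Lipschitz on the positivity set and that the constitutive nonlinearities are polynomial or smooth in the state variables. The ensuing contraction constant is again made arbitrarily small by shrinking $T_*$, which closes the fixed-point argument and yields the desired unique solution.

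The principal obstacle, and precisely the place where the present argument departs from the no-inflow theory of~\cite{cho2006existence,danchin2001local,solonnikov1980solvability}, lies in the transport step: deriving the $L^\infty_t W^{1,q}_x \cap W^{1,\infty}_t L^q_x$ estimate on $\varrho$ from boundary data with only the natural regularity listed in~\eqref{data_density}. This is what forces the geometric assumption that $\Gamma_{\textup{in}}$ be a closed surface together with the strict inflow condition $-\bu_B\cdot n \geq c > 0$; jointly they guarantee that characteristics entering through $\Gamma_{\textup{in}}$ transport the tangential boundary regularity of $\varrho_B$ into the interior without creating a compatibility singularity at $\partial\Gamma_{\textup{in}}$, which is the content to be established in Lemma~\ref{lemma::est_der}. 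Once that density estimate is in place, all the parabolic sub-problems fit into the standard $L^p$-$L^q$ maximal regularity machinery and the fixed-point argument closes.
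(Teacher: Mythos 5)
Your proposal follows essentially the same strategy as the paper: linearize the system, invoke maximal $L^p$--$L^q$ regularity (Denk--Hieber--Pr\"uss) for the parabolic subproblems for $\bu$, $\theta$, $\bB$, derive the $L^\infty_t W^{1,q}_x$ estimate for $\varrho$ from the inflow boundary data (the content of Lemma~\ref{lemma::est_der}, which you correctly identify as the crux), close a ball under the fixed-point map by shrinking $T_*$, and prove contraction in a weaker topology. Two minor deviations are worth flagging. First, you linearize the continuity equation as $\pdt\varrho + \tilde\bu\cdot\nabla_x\varrho = -\tilde\varrho\,\div_x\tilde\bu$, i.e., with the zero-order term frozen in $\tilde\varrho$; the paper instead keeps $-\varrho\,\div_x\bv$ with the unknown $\varrho$, so that the equation remains a homogeneous linear transport problem. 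This is what makes the multiplicative (exponential) lower bound~\eqref{ineq:max_min} and the Gronwall structure of Lemmas~\ref{lemma::est_continuity} and~\ref{lemma::est_der} apply verbatim; with your choice the positivity of $\varrho$ is only preserved additively for small time, and the higher-order estimate would need a source-term variant of Lemma~\ref{lemma::est_der}. Both close the fixed-point argument, but the paper's form is the one its lemmas are written for. Second, the positivity constraint $\tilde\theta\geq\underline\theta/2$ in your ball is not needed: the paper's fixed-point ball imposes only $\varrho\geq r_0$, since the ellipticity constants of the parabolic subproblems involve $1/\varrho$ but not $\theta$; the temperature minimum principle of Section~\ref{Sec:MinMax} is a separate (physically motivated) statement, not an ingredient of the fixed-point proof.
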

\end{framed}

The proof of Theorem~\ref{theorem:main_thm} will be given in multiple steps and is detailed in Sections~\ref{Sec:Linearized} and \ref{Sec:Main_proof}. We present, in the next section, the minimum and maximum principles for the density and temperature.

\section{Minimum and maximum principles}\label{Sec:MinMax}

\subsection{Minimum-maximum principle for the density}
Provided the velocity field $\bu$ is regular enough to allow for the Lagrangian formulation of the equation of continuity, we infer from integrating over the streamlines (see, e.g., \cite[p. 16]{feireisl2022mathematics}) that the density will verify a minimum-maximum principle of the form
\begin{equation}\label{ineq:max_min}
	\begin{aligned}
\min\{\min_{\Omega}\varrho_0, \min_{\Gamma_{\textup{in}}}\varrho_B\} \exp&\left(-\int_0^\tau \|\div_x \bu\|_{L^\infty(\Omega)} \dt\right) \leq \varrho(\tau,\cdot) \\
&\leq \max\{\max_{\Omega}\varrho_0, \max_{\Gamma_{\textup{in}}}\varrho_B\} \exp\left(\int_0^\tau \|\div_x \bu\|_{L^\infty(\Omega)} \dt\right).
	\end{aligned}
\end{equation}
As mentioned in \cite{abbatiello2024local}, it is sufficient for the velocity to satisfy
\begin{equation}
	\bu \in L^1(0,T; W^{1,\infty}(\Omega)),
\end{equation}
for the aforementioned Lagrangian formulation, and subsequently \eqref{ineq:max_min}, to be valid.

\subsection{Minimum principle for the temperature} 
We multiply the energy equation \eqref{eq:energy_equation_theta} by \[\exp\left(\frac1{c_v}\int_0^\tau \|\div_x \bu\|_{L^\infty(\Omega)}\dt\right),\] then similarly to \cite{abbatiello2024local} we have:
\begin{equation}\label{eq:Theta_differential_ineq}
	\pdt \Theta + \bu\cdot\nabla_x \Theta - \frac{\kappa}{\varrho c_v} \Delta_x \Theta \geq 0,
\end{equation}
on $(0,T)\times \Omega$, with 
\[\Theta = \theta \exp\left(\frac 1 {c_v} \int_0^\tau \|\div_x \bu\|_{L^\infty(\Omega)}\dt\right).\]
We note that the boundary and initial data are then given by:
\begin{equation}
	\begin{aligned}
	\Theta|_{\partial \Omega} &= \Theta_B = \theta_B \exp\left(\frac 1 {c_v} \int_0^\tau \|\div_x \bu\|_{L^\infty(\Omega)}\dt\right),\\
	\Theta(0,\cdot) &= \Theta_0 = \theta_0. 
	\end{aligned}
\end{equation}

Since the differential inequality above involves a parabolic operator, we can apply the maximum principle \cite[Ch. 3, Theorem 5]{protter2012maximum} (to $- \Theta$) to find that $\Theta$ attains its minimum either at the boundary or at $t=0$. 
Thus \[\Theta \geq \min\{\Theta_0, \Theta_B\}, \]
which implies 
\begin{equation}
	\begin{aligned}
		\theta(\tau,\cdot) \geq \min \Bigg\{& \min_{\Omega} \theta_0 \exp\left(-\frac 1 {c_v} \int_0^\tau \|\div \bu\|_{L^\infty(\Omega)}\dt\right),
		& \min_{\Gamma_D, s\in(0,\tau)} \theta_B(s) \exp\left(-\frac 1 {c_v} \int_{s}^\tau \|\div \bu\|_{L^\infty(\Omega)}\dt\right)  \Bigg\}
	\end{aligned} 
\end{equation}
for all $\tau>0$.

\section{$L^p\--L^q$ setting: Linear Estimates}\label{Sec:Linearized}
In this section, we plan to use the $L^p\--L^q$ theory for parabolic equations~\cite{denk2007optimal} to provide estimates for the MHD-NSF model under study. Our approach will then use a Banach fixed-point argument to control the nonlinear terms in the system~\eqref{eq:energy_equation_theta} and will be in spirit close to, e.g., the one described in~\cite[Chapter 5]{feireisl2022mathematics}
\begin{enumerate}[i)]
	\item fix some velocity field $\bv$ which verifies the boundary and initial condition of $\bu$;
	\item solve the continuity equation for $\varrho$ with fixed velocity $\bv$;
	\item solve the equation for the temperature $\theta$ with fixed velocity and magnetic fields $\bv$ and $\bB$;
	\item solve for the magnetic field with $\bu = \bv$;
	\item solve for $\bu$ with the drift term linearized to $\bv\cdot \nabla_x \bu$;
	\item apply a fixed point to find $(\varrho, \bu, \theta, \bB)$ which solves the nonlinear problem~\eqref{eq:energy_equation_theta} with the boundary data~\eqref{BC:velocity}, \eqref{BC:density}, \eqref{BC:magneticfield} and initial data~\eqref{Initial_data}.
\end{enumerate}

We write the linearized system of equation used for the fixed-point mapping definition
\begin{equation}\label{sys::linearized_system}
	\begin{aligned}
		&\pdt \varrho + \bv \cdot \nabla_x \varrho = -\varrho \div_x \bv,
		\\
		&\pdt \bu + \bv\cdot \nabla_x\bu =  \frac1{\varrho} [\div_x (\SS(\bu))+\curl_x\bB \times \bB]-\nabla_x \theta - \theta \nabla_x \log(\varrho) + \nabla_x G,
		\\
		& \pdt \bB + \curl_x(\xi  \curl_x \bB)= \curl_x (\bv \times \bB),\\
		& \div_x \bB = 0,
		\\
		&\pdt \theta + \bv\cdot \nabla_x\theta - \frac{\kappa}{\varrho c_v} \Delta \theta = \frac{1}{\varrho c_v} \SS(\bu):\mathbb{D}_x \bv + \frac{\xi}{\varrho c_v} |\curl_x \bB|^2 - \frac{\theta}{c_v} \div_x \bv .
	\end{aligned}
\end{equation}

\subsection{Estimate for the continuity equation}
In what follows, we establish, following the approach of \cite[Lemma 4.1]{kotschote2012strong}, an initial estimate for the continuity equation with a right-hand side of the form
\begin{equation}\label{eq:rho}
	\begin{aligned}
	\pdt \varrho + \bv \cdot \nabla_x \varrho + \varrho \div_x \bv = f &\qquad \textrm{on } [0,T] \times \Omega,\\
	\varrho(0,\cdot) = \varrho_0  &\qquad \textrm{on } \Omega,\\
	\varrho(\cdot,x) = \varrho_B &\qquad \textrm{on } \Gamma_{\textup{in}},
	\end{aligned} 	   
\end{equation}
where $\varrho_0$ and $\varrho_B$ are prescribed initial data with regularity clarified below, and the source term $f$, independent of $\varrho$, is subject to the condition:
\begin{equation}\label{ineq:f_Lp}
\|f(t)\|_{L^p(\Omega)} \leq  k_1(t), \qquad k_1(t) \,\in L_1(0,T),
\end{equation}
for almost every $t\in(0,T)$.
The main difference with the aforementioned work of Kotschote~\cite{kotschote2012strong} is that we do not rely on the condition that $\bu_B\cdot n \geq 0 $ (no incoming flow). Rather, we use the fact that we know the mass density $\varrho_B$ at the inflow part of the boundary.

\begin{remark}[On the source term $f$]
	The source term $f$ is artificially added for the purpose of being used for the contraction estimate for the mass density; see Lemma~\ref{lemma:contraction_dens}. For the original system~\eqref{eq:energy_equation_theta}, we point out that $f=0$. Therefore, except for Lemma~\ref{lemma::est_continuity}, we will set $f=0$.
\end{remark}

\begin{lemma}\label{lemma::est_continuity}
	Let $\Omega$ be a bounded domain in $\R^3$ and $p \in (1,\infty)$. Further, assume that $\varrho_0 \in L^p(\Omega)$, $\varrho_B \in L^p(0,T;L^p(\partial \Omega, [\bu_B\cdot n]^- \dsig)$, $\bv \in L^1(0,T; W^{1,\infty}(\Omega))$, and $f$ fulfills \eqref{ineq:f_Lp}. Then a solution $\varrho$ to \eqref{eq:rho} satisfies the estimate:
	\begin{equation}\label{ineq:Lp_rho}
		\begin{aligned}
		\|&\varrho\|_{L^\infty(0,T;L^p(\Omega))} \\ &\leq \Bigg\{ \left(\|\varrho_0\|_{L^p(\Omega)} +\|\varrho_B\|_{L^p(0,T;L^p(\partial \Omega, [\bu_B\cdot n]^- \dsig)}+ \|k_1\|_{L^1(0,T)}\right) \exp \left[  \int_0^T[\|\div_x \bv\|_{L^\infty(\Omega)} ] \dt\right] \Bigg\}.
		\end{aligned}
	\end{equation}
Assume, moreover that $\varrho_0\in L^\infty(\Omega)$, $\varrho_B \in L^\infty(0,T;L^\infty(\partial \Omega, [\bu_B\cdot n]^- \dsig)$ and that $k_1$ in \eqref{ineq:f_Lp} is independent of $p$. Then, we conclude that
	\begin{equation}\label{ineq:Linf_rho}
		\begin{aligned}
			\|&\varrho\|_{L^\infty(0,T;L^\infty(\Omega))} \\ &\leq \Bigg\{ \left(\|\varrho_0\|_{L^\infty(\Omega)} +\|\varrho_B\|_{L^\infty(0,T;L^\infty(\partial \Omega, [\bu_B\cdot n]^- \dsig)}+ \|k_1\|_{L^1(0,T)}\right) \exp \left[  \int_0^T[\|\div_x \bv\|_{L^\infty(\Omega)} ] \dt\right] \Bigg\}.
		\end{aligned}
	\end{equation}
 \end{lemma}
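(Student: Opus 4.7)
My plan is to establish \eqref{ineq:Lp_rho} via a standard energy-type estimate followed by a Grönwall argument, mirroring the approach in \cite{kotschote2012strong}, the novelty being the substitution of the prescribed Dirichlet datum on the inflow boundary in place of the sign argument used in the no-inflow setting. The $L^\infty$ bound \eqref{ineq:Linf_rho} will then follow by passing to the limit $p\to\infty$ in \eqref{ineq:Lp_rho}.

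The first step (justified by a standard mollification of $\varrho_0$, $\varrho_B$, $\bv$ and $f$) is to multiply \eqref{eq:rho} by $p|\varrho|^{p-2}\varrho$ and rewrite the result as the conservation form
\[
\partial_t|\varrho|^p+\div_x(\bv|\varrho|^p)+(p-1)|\varrho|^p\div_x\bv = p|\varrho|^{p-2}\varrho\,f(\varrho).
\]
Integrating over $\Omega$ and applying the divergence theorem produces a boundary integral which we decompose along $\Gamma_{\textup{in}}\cup\Gamma_{\textup{out}}\cup\Gamma_{\textup{wall}}$. The wall contribution vanishes, the outflow contribution is non-negative and can be dropped as a sign-favorable term, and on $\Gamma_{\textup{in}}$ we substitute $\varrho=\varrho_B$ to rewrite the contribution as $-\int_{\Gamma_{\textup{in}}}|\varrho_B|^p[\bv\cdot n]^-\,d\sigma$. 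Combining with Hölder's inequality and the hypothesis \eqref{ineq:f_Lp} on the source, and setting $\phi(t):=\|\varrho(t)\|_{L^p(\Omega)}$, gives the Bernoulli-type differential inequality
\[
\frac{d}{dt}\phi^p \leq \|\varrho_B\|_{L^p(\partial\Omega,[\bu_B\cdot n]^-\dsig)}^p + \bigl((p-1)\|\div_x\bv\|_{L^\infty(\Omega)}+p\,k_1(t)\bigr)\phi^p + p\,k_2(t)\,\phi^{p-1}.
\]

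The technical heart of the proof is then to close this Grönwall-type inequality. Dividing by $p\phi^{p-1}$ (after the harmless regularization $\phi\leadsto\phi+\epsilon$) gives a differential inequality for $\phi$ itself, whose delicate term is the boundary source $\|\varrho_B\|^p/(p\phi^{p-1})$. This can be absorbed either by an appropriate Young inequality or, more transparently, by the substitution $\phi\leadsto(\phi^p+\int_0^t\|\varrho_B\|^p ds)^{1/p}$, after which the integral form of Grönwall's inequality produces \eqref{ineq:Lp_rho}. For the $L^\infty$ estimate \eqref{ineq:Linf_rho}, we observe that under the standing hypothesis $\varrho_0\in L^\infty$ and $k_1,k_2$ independent of $p$, every term on the right-hand side of \eqref{ineq:Lp_rho} converges to its $L^\infty$ counterpart as $p\to\infty$ (using that $\Omega$ and $[0,T]\times\Gamma_{\textup{in}}$ have finite measure) while the exponent $\int_0^T(\|\div_x\bv\|_{L^\infty}+k_1)\,\dt$ is $p$-independent; passing to the limit yields the claim.

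The main obstacle I anticipate is Step 4, namely closing the Bernoulli-type inequality with constants whose behaviour is benign as $p\to\infty$. A careless use of Young's inequality to absorb $pk_2\phi^{p-1}$ introduces prefactors like $p^p$, which would spoil the $L^\infty$ limit; this is what forces the more careful integrating-factor or substitution trick above. A secondary (largely cosmetic) issue is the rigorous justification of the formal multiplication by $|\varrho|^{p-2}\varrho$ and the boundary trace of $|\varrho|^p$, which is handled by the usual mollification-and-pass-to-the-limit procedure, aided by the hypothesis $\bv\in L^1(0,T;W^{1,\infty}(\Omega))$ which makes the Lagrangian flow of $\bv$ well-defined.
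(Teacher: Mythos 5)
Your proposal is correct and follows essentially the same route as the paper: multiply by $p|\varrho|^{p-2}\varrho$, integrate by parts to expose the boundary flux, substitute $\varrho=\varrho_B$ on $\Gamma_{\textup{in}}$ while discarding the sign-favorable outflow term, and close the resulting Bernoulli-type differential inequality before letting $p\to\infty$. The paper simply invokes the Gronwall--Perov inequality of \cite[p.~360]{mitrinovic1991inequalities} to handle the $\phi^{p-1}$ term in one stroke (which automatically yields $p$-benign constants), so your concern about $p^p$-type prefactors from a naive Young's inequality is well-placed but already resolved by that citation, and your integrating-factor/substitution alternatives are just hands-on proofs of the same inequality.
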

For a real-valued function $v: A \to \R^+$, the notation $[\cdot]^-$, above should be interpreted as 
\[[v]^-(t) = \begin{cases}
	v(t) \qquad\textrm{if }  v(t) \leq 0, \\
	0 	\quad\qquad\textrm{if }  v(t) > 0,
\end{cases}\]
for $t\in A$.
\begin{proof}
	Inspired by the proof of \cite[Lemma 4.2]{kotschote2012strong}, we multiply \eqref{eq:rho} by $p |\varrho|^{p-2} \varrho$, integrating over $\Omega$ by parts, we obtain  
\begin{equation}
		\begin{aligned}
		\ddt  \|\varrho\|_{L^p(\Omega)}^p =& (1-p) \int_{\Omega} \div_x \bv |\varrho|^p \dx + p \int_\Omega f  |\varrho|^{p-2} \varrho \dx - \int_{\partial\Omega} |\varrho|^p \bu_B\cdot n \dsig
		\\ &\begin{multlined}
		\leq p\,  \|\div_x \bv\|_{L^\infty(\Omega)}  \|\varrho\|^p_{L^p(\Omega)} \\ + p\, k_1(t) \|\varrho (t)\|_{L^p(\Omega)}^{p-1} - \int_{\partial \Omega} |\varrho_B|^p [\bu_B\cdot n]^- \dsig,
	\end{multlined}
	\end{aligned}
\end{equation}
where in the last line, we have used the boundary condition on the inflow of the boundary and assumption~\eqref{ineq:f_Lp}. 

Thus integrating in time over $[0,T]$, and applying the Gronwall--Perov inequality~\cite[p.~360]{mitrinovic1991inequalities}, yields:
\begin{equation}
	\begin{multlined}
		  \|\varrho\|_{L^p(\Omega)}^p \leq \Bigg\{ \left( \|\varrho_0\|_{L^p(\Omega)} +\|\varrho_B\|_{L^p(0,T;L^p(\partial \Omega, [\bu_B\cdot n]^- \dsig)}\right)   \exp \left[  \int_0^T\|\div_x \bv\|_{L^\infty(\Omega)}  \dt\right] \\+   \int_0^T k_1(t) \exp \left[  \int_t^T\|\div_x \bv(r)\|_{L^\infty(\Omega)} \,\textup{d}r\right] \dt 
		 \Bigg\}^{p},
	\end{multlined}
\end{equation}
thus,
\begin{equation}
	\begin{multlined}
 \|\varrho\|_{L^p(\Omega)} \leq \Bigg\{ \left(\|\varrho_0\|_{L^p(\Omega)} +\|\varrho_B\|_{L^p(0,T;L^p(\partial\Omega, [\bu_B\cdot n]^- \dsig)}+ \|k_1\|_{L^1(0,T)}\right)\\  \exp \left[  \int_0^T\|\div_x \bv\|_{L^\infty(\Omega)}   \dt\right] \Bigg\},
\end{multlined}
\end{equation}
which shows \eqref{ineq:Lp_rho}. Letting $p\to \infty$ in \eqref{ineq:Lp_rho} yields \eqref{ineq:Linf_rho}.
\end{proof}

\subsubsection{Existence and uniqueness of $\varrho$}
The existence of a unique weak solution follows from the method of characteristics. Indeed, we assume that we have enough regularity on the velocity field $\bv\in L^1(0,T; W^{1,\infty} (\Omega))$, for the characteristics to be well-defined.

For $\bv\in L^1(0,T; W^{1,\infty} (\Omega))$ fixed, it is immediate to see that the solution $\varrho$ is unique. Indeed, assume $\varrho_1$, $\varrho_2$ are two solutions to~\eqref{eq:rho} and consider the equation verified by the difference $\varrho_1-\varrho_2$, which corresponds to the homogeneous problem \eqref{eq:rho} (with $f =0$, $\varrho_0 = 0$, $\varrho_B = 0$), then by estimate~\eqref{ineq:Lp_rho}, $\varrho_1-\varrho_2 =0$.

\subsubsection{Higher regularity in time}
We can expect higher regularity in time by testing by $|\varrho_t|^{p-2}\varrho_t$, thus obtaining control on $\|\varrho_t\|_{L^p(0,T;L^p(\Omega))}$. This in turn yields control on the term $\div_x(\varrho \bu)$ in $L^p(0,T;L^p(\Omega))$. 
\subsubsection{Higher-order estimates in space of the density $\varrho$}

In order to show the existence of regular enough (strong) solutions of the continuity equation, we need to assume additional assumptions compared to Lemma~\ref{lemma::est_continuity}. In particular, we need to estimate the $L^p$ norms of the traces of the derivatives at the boundary. This forces us to assume regular data on the boundary and a uniform boundedness from below of $-\bu_B\cdot n \geq c>0 $ on $\Gamma_{\textup{in}}$. This is consistent with observations made in \cite{fiszdon1983initial,valli1982existence}. This requirement, together with the assumption that the velocity field is at least in $W^{1,\infty}(\Omega)$ means that $\Gamma_{\textup{in}}$ is disconnected from the rest of the boundary. Thus, similarly to \cite[Section B]{valli1986navier}, we require that $\Gamma_{\textup{in}}$ is a closed surface.
\begin{lemma}[Higher-order spatial estimates]\label{lemma::est_der}
	Let $p,q \in (1,\infty)$, such that $q>d$, $2 (1-1/p) > \frac {d}{q}$ (equivalently $\frac{2q}{2q-d}<p$), and 
		\begin{equation}\label{cond_p_q}
		1-\frac2p + \frac 1q  \geq 0.
		\end{equation}
		 Let the velocity field $\bv \in L^p(0,T; W^{2,q}(\Omega)) \cap W^{1,p}(0,T; L^q(\Omega))$
		  and let $\varrho_0$, $\varrho_B$ verify~\eqref{data_density}.
	  Then, there exists a unique solution  $\varrho \in L^\infty(0,T;W^{1,q}(\Omega))$  of \eqref{eq:rho} (with source term $f=0$) and it verifies the following estimate:
		 \begin{equation}\label{estimate_derivative_density}
		 	\begin{aligned}
		 	|\varrho &|_{L^\infty(0,T; W^{1,q}(\Omega))}^q\\ \lesssim & \exp \left(\|\div_x \bv\|_{L^1(0,T;L^\infty(\Omega))} + \int_0^T|\bv|_{W^{1,\infty}(\Omega)} \dt + \int_0^T |\bv|_{W^{2,q}(\Omega)}\|\varrho\|_{L^\infty(\Omega)} \dt \right)
		 	 \\&\Bigg\{|\varrho_0 |_{W^{1,q}(\Omega)}^q  +  \| \bv \cdot n\|_{L^\infty(0,T;L^\infty(\Gamma_{\textup{in}}))} \|\varrho_B\|_{L^q(0,T;L^q(\Gamma_{\textup{in}}))}^q\\&+  \|\pdt \varrho_B\|_{L^q(0,t;L^q(\Gamma_{\textup{in}}))}^q + \|\bv\|^q_{L^\infty(0,t;L^\infty(\Gamma_{\textup{in}}))} \|\varrho_B\|_{L^q(0,t;W^{1,q}(\Gamma_{\textup{in}}))}^q \\ &+\|\div_x \bv  \|_{L^q(0,t;L^q(\Gamma_{\textup{in}}))}^q\|\varrho_B\|^q_{L^\infty(0,t;L^\infty(\Gamma_{\textup{in}}))}, 	 \Bigg\}
		 	\end{aligned}
		 \end{equation}
	 where $|\cdot|_{W^{k,p}(\Omega)}$ stands for the semi-norm of $W^{k,p}(\Omega)$ and the hidden constant is independent of the final time $T$.
\end{lemma}

Before going into the proof let us point out that the assumption $$\bv \in L^p(0,T; W^{2,q}(\Omega)) \cap W^{1,p}(0,T; L^q(\Omega))$$ matches the regularity we expect to obtain for the velocity $\bu$ using the standard $L^p$--$L^q$ framework of, e.g., \cite{denk2007optimal}. On the other hand, according to \cite[Chapter III, Theorem 4.10.2]{amann1995linear}, 
\begin{equation}\label{reg_trace_v}
	\bv \in C([0,T]; B^{2(1-1/p)}_{qp} (\Omega)),
\end{equation}
where $B^{2(1-1/p)}_{qp} (\Omega)  = (L^q(\Omega), W^{2,q}(\Omega))_{1-1/p,p}$ is a Besov space and matches the interpolation of the Sobolev spaces to which $\bv$ and its time derivative belong; see~\cite[Theorem 3.3.6]{triebel1983theory} (see also \cite[Section 3.4.2]{triebel1983theory} for interpretation of Sobolev spaces as a special case of a Triebel--Lizorkin space). We note also that the embedding 
\begin{equation}\label{embedding_Besov_cont}
	B^{2(1-1/p)}_{qp} (\Omega) \hookrightarrow C(\overline{\Omega})
\end{equation}
holds on account of the condition  $2 (1-1/p) > \frac {d}{q}$, see~\cite[Theorem 3.3.1]{triebel1983theory}.

\begin{proof}
	First note that the condition $q > d$, ensures the embedding  \[L^p(0,T; W^{2,q}(\Omega)) \hookrightarrow L^1(0,T; W^{1,\infty}(\Omega))\] needed for the method of characteristics to be valid.
	
	To prove the estimate, we use the approach of \cite[Lemma 2.4]{valli1982existence}. Let $\alpha$ be a multi-index with $|\alpha|\leq 1$. Then, taking applying $D^\alpha$ to the continuity equation \eqref{eq:rho} (with source term $f=0$) and testing with $ q |D^\alpha \varrho|^{q-2} D^\alpha \varrho$, one obtains 
	\begin{equation}\label{eq:identity_rho_der}
		\begin{aligned}
		\ddt \|D^\alpha\varrho\|_{L^q(\Omega)}^q = & - q \int_\Omega D^\alpha (\bv \cdot \nabla_x\varrho) |D^\alpha\varrho|^{q-2} D^\alpha\varrho  \dx - q \int_{\Omega} D^\alpha(\varrho \div_x \bv) |D^\alpha\varrho|^{q-2} D^\alpha\varrho \dx. 
		\end{aligned} 
	\end{equation}
	As identified by Valli~\cite[Lemma 2.4]{valli1982existence}, the following term requires integration by parts to be controlled
	\[- \int_{\Omega} \bv\cdot(\nabla_x(|D^\alpha \varrho|^q)) = \int_{\Omega} \div_x \bv \, (|D^\alpha \varrho|^q) \dx - \int_{\partial\Omega} \bv \cdot n (|D^\alpha \varrho|^q) \dsig.\] 

\subsection*{Controlling the boundary term.} 
Compared to the case $\alpha=0,$ we do not have \emph{a priori} information on the trace of $D^\alpha \varrho$ on $\Gamma_\textup{in}$, therefore we need to establish such an estimate. To this end, following the idea of \cite{fiszdon1983initial}, we use the natural coordinate system near the boundary formed of the orthonormal vectors $n$ (outward normal) and $\{\tau_i\}_{1 \leq i \leq d-1}$ (tangential unit vectors). 

In the tangential directions, the control is directly given by the regularity of the boundary condition $\varrho_B$ and that of the velocity field $\bv$:
\begin{equation}\label{ineq:control_tan}
	\begin{aligned}
		- \int^t_0 \int_{\Gamma_{\textup{in}}} \bv \cdot n \left|\frac{\partial}{\partial \tau_i}\varrho\right|^q \dsig \dt   = &- \int^t_0 \int_{\Gamma_{\textup{in}}} \bv \cdot n \left|\frac{\partial}{\partial \tau_i}\varrho_B\right|^q \dsig \dt \\ \leq & \| \bv \cdot n\|_{L^\infty(0,T;L^\infty(\Gamma_{\textup{in}}))} \|\varrho_B\|_{L^q(0,T;W^{1,q}(\Gamma_{\textup{in}}))}^q,
	\end{aligned}
\end{equation}
where $i\in \{1, \ldots ,d-1\}.$

To establish an estimate in the normal direction to the boundary, we lift the boundary condition $\varrho_B$ to a function on $\Omega$ (which we still denote $\varrho_B$).
Let $\overline{\varrho} = \varrho - \varrho_B$, then $\overline{\varrho}$, verifies:
\begin{equation}\label{eq:homogeneous_prob}
	\pdt \overline\varrho + \bv \cdot \nabla_x \overline\varrho + \div_x \bv \overline\varrho = -\pdt \varrho_B - \bv \cdot \nabla_x \varrho_B - \div_x \bv \varrho_B.
\end{equation}
Formally, evaluating \eqref{eq:homogeneous_prob} on $\Gamma_{\textup{in}}$, we obtain:
\begin{equation}\bv \cdot \nabla_x \overline\varrho = -\pdt \varrho_B - \div_x (\varrho_B \bv)
\end{equation}
owing to  $\overline\varrho |_{\Gamma_{\textup{in}}} = 0$. We further simplify by noticing that the tangential component of $\nabla \overline{\varrho} $ is equal to $0$, such that 
\[\bv \cdot \nabla_x \overline\varrho = (\bv \cdot n) (\nabla_x \overline\varrho \cdot n) = (\bv \cdot n) \frac{\partial}{\partial n} \overline\varrho,\]
this provides control on the normal derivatives as long as $\bv\cdot n$ is bounded away from $0$ on $\Gamma_{\textup{in}}$. Indeed, we can test the resulting equation on the boundary by $\left|\frac{\partial}{\partial n}\overline\varrho\right|^{q-2} \frac{\partial}{\partial n}\overline\varrho$ and integrate over the inflow part of the boundary ($\Gamma_{\textup{in}}$) and in time on $(0,t)$, such that  we obtain
\begin{equation}
	\begin{aligned}
		\int^t_0 \int_{\Gamma_{\textup{in}}} (- \bv \cdot n) \left|\frac{\partial}{\partial n}\overline\varrho\right|^q \dsig \dt =  & \int^t_0 \int_{\Gamma_{\textup{in}}} \left( \pdt \varrho_B + \varrho_B \div_x \bv + \bv \cdot \nabla_x \varrho_B \right) \left|\frac{\partial}{\partial n}\overline\varrho\right|^{q-2} \frac{\partial}{\partial n}\overline\varrho \dsig \dt\\
		\leq & C(\varepsilon) \int^t_0 \int_{\Gamma_{\textup{in}}} \left| \pdt \varrho_B + \varrho_B \div_x \bv + \bv \cdot \nabla_x \varrho_B \right|^q \dsig \dt+ \varepsilon \int^t_0 \int_{\Gamma_{\textup{in}}}  \left|\frac{\partial}{\partial n}\overline\varrho\right|^{q}  \dsig \dt,\\
	\end{aligned}  
\end{equation}
where the last line was obtained using Young's inequality with exponents $q$ and $\frac{q}{q-1}$. The constant $\varepsilon>0$ is arbitrary, $C(\varepsilon)>0$ is a $\varepsilon$-dependent constant. Now, we use the lower bound of the velocity on the inflow part of the boundary $c>0$ as expressed in \eqref{data_velocity}:
\begin{equation}
	\begin{aligned}
		c \int^t_0 \int_{\Gamma_{\textup{in}}} \left|\frac{\partial}{\partial n}\overline\varrho\right|^q \dsig \dt
		\leq & C(\varepsilon) \int^t_0 \int_{\Gamma_{\textup{in}}} \left| \pdt \varrho_B + \varrho_B \div_x \bv + \bv \cdot \nabla_x \varrho_B \right|^q \dsig \dt+ \varepsilon \int^t_0 \int_{\Gamma_{\textup{in}}}  \left|\frac{\partial}{\partial n}\overline\varrho\right|^{q}  \dsig \dt,\\
	\end{aligned}  
\end{equation}
thus choosing $\varepsilon$ smaller than the constant $c$ (e.g., $\frac c2$), we obtain that 
\begin{equation}\label{est_boundary_in}
	\begin{aligned}
		\left(\int^t_0 \int_{\Gamma_{\textup{in}}} \left|\frac{\partial}{\partial n}\overline\varrho\right|^q \dsig \dt\right)^{1/q} & \lesssim \|\pdt \varrho_B + \bv \cdot \nabla_x\varrho_B + \div_x \bv \varrho_B \|_{L^q(0,t;L^q(\Gamma_{\textup{in}}))} \\
		&\lesssim \|\pdt \varrho_B\|_{L^q(0,t;L^q(\Gamma_{\textup{in}}))} + \|\bv\|_{L^\infty(0,t;L^\infty(\Gamma_{\textup{in}}))} \|\varrho_B\|_{L^q(0,t;W^{1,q}(\Gamma_{\textup{in}}))} \\ 
		& \qquad + \|\div_x \bv \varrho_B \|_{L^q(0,t;L^q(\Gamma_{\textup{in}}))}.
	\end{aligned}
\end{equation}

\subsubsection*{Estimating $ \|\div_x \bv \varrho_B \|_{L^q(0,t;L^q(\Gamma_{\textup{in}}))}$.}
We present the estimate procedure for the more involved case $p<q$. Indeed, the estimate \eqref{eq:estimate_divv_rho} below holds in a more elementary way for the case $p\geq q$.

First, we identify the trace space of $\div_x \bv$ on $\Gamma_{\textup{in}}$ which are readily identified as trace spaces of Sobolev spaces. Then, owing to \cite[Chapter III, Theorem 4.10.2]{amann1995linear} (see also \cite[Corollary 3.12.3]{bergh2012interpolation}), we know that 
\begin{equation}\label{divx_embedding}
	\div_x \bv \in C([0,t]; B^{1-\frac2p -\frac1q}_{qp} (\Gamma_{\textup{in}})) \cap L^p(0,t; W^{1-\frac1q,q} (\Gamma_{\textup{in}})) \hookrightarrow L^q (0,t; B^{1-\frac2p + \frac 1q }_{qq'} (\Gamma_{\textup{in}})),
\end{equation}
and 
$$\varrho_B \in C([0,t]; B^{1-\frac 1q}_{qq} (\Gamma_{\textup{in}})) \cap L^{q}(0,T; W^{1, q}(\Gamma_\textup{in})),$$
where for the embedding in \eqref{divx_embedding} we used real interpolation with the parameters $0<p/q<1$ and $q'$, where $q'$ is the conjugate H\"older exponent of $q$. (Note that interpolation of $L^p$ spaces (time component) is simply obtained using H\"older's inequality).

We note that $q>d\geq 2 >q'$ and that $1-\frac2p + \frac 1q  \geq 0$ due to \eqref{cond_p_q}. Thus using standard properties of Besov spaces~\cite[Section 2.3.2, Propostition 2]{triebel1983theory} (see also the remark after \cite[Proposition 3.2.4]{triebel1983theory} and \cite[Proposition 3.3.1]{triebel1983theory}), we know the following embeddings hold:
$$B^{1-\frac2p + \frac 1q}_{qq'} (\Gamma_{\textup{in}}) \hookrightarrow B^{0}_{qq'}  (\Gamma_{\textup{in}}) \hookrightarrow F^{0}_{q2}  (\Gamma_{\textup{in}}) = L^q (\Gamma_{\textup{in}}).$$

On the other hand, we also have the embedding
$B^{1-\frac 1q }_{qq} (\Gamma_{\textup{in}}) \hookrightarrow C(\Gamma_{\textup{in}}) \hookrightarrow L^\infty(\Gamma_{\textup{in}}),$ due to the condition $q>d$.

Putting these embeddings together, we can estimate the last term in \eqref{est_boundary_in} as 
\begin{equation}\label{eq:estimate_divv_rho}
	\|\div_x \bv \varrho_B \|_{L^q(0,t;L^q(\Gamma_{\textup{in}}))}\lesssim  \|\div_x \bv  \|_{L^q(0,t;L^q(\Gamma_{\textup{in}}))}\|\varrho_B\|_{L^\infty(0,t;L^\infty(\Gamma_{\textup{in}}))}.
\end{equation}	

\subsection*{Establishing an estimate for $\varrho$}
Going back to \eqref{eq:identity_rho_der}, 
\begin{equation}
	\begin{aligned}
\ddt \|D^\alpha\varrho\|_{L^q(\Omega)}^q \leq &  q  \|D^\alpha \bv\|_{L^\infty(\Omega)} \|\nabla_x \varrho\|_{L^q(\Omega)} \|D^\alpha \varrho\|_{L^q(\Omega)}^{q-1}  - \int_{\partial\Omega} \bv \cdot n (|D^\alpha \varrho|^q) \dsig \\ &+ \|\div_x \bv\|_{L^\infty(\Omega)} \|D^\alpha\varrho\|_{L^q(\Omega)}^{q} \\ &
+  \|\bv\|_{W^{2,q}(\Omega)} \|D^\alpha\varrho\|_{L^q(\Omega)}^{q-1} \|\varrho\|_{L^\infty(\Omega)}. 
	\end{aligned}
\end{equation}
Thus summing over all multi-indices $\alpha$ with $|\alpha|\leq1$, and recalling that $|\cdot|_{W^{1,q}(\Omega)}$ denotes the semi-norm of $W^{1,q}(\Omega)$, we obtain after integration on $(0,T)$ 
\begin{equation}
	\begin{aligned}
		 |\varrho|_{W^{1,q}(\Omega)}^q \lesssim \,& |\varrho_0|_{W^{1,q}(\Omega)}^q + \int_0^T |\bv|_{W^{1,\infty}(\Omega)} |\varrho|_{W^{1,q}(\Omega)}^q \dt\\ & + \int_0^T \|\div_x \bv\|_{L^\infty(\Omega)} |\varrho|_{W^{1,q}(\Omega)}^{q} \dt \\ &
		+  \int_0^T\|\bv\|_{W^{2,q}(\Omega)} |\varrho|_{W^{1,q}(\Omega)}^{q-1} \|\varrho\|_{L^\infty(\Omega)} \dt \\& 	+ 
		\| \bv \cdot n\|_{L^\infty(0,T;L^\infty(\Gamma_{\textup{in}}))} \|\varrho_B\|_{L^q(0,T;L^q(\Gamma_{\textup{in}}))}^q\\&+  \|\pdt \varrho_B\|^q_{L^q(0,T;L^q(\Gamma_{\textup{in}}))} + \|\bv\|_{L^\infty(0,T;L^\infty(\Gamma_{\textup{in}}))}^q \|\varrho_B\|_{L^q(0,T;W^{1,q}(\Gamma_{\textup{in}}))}^q \\ &+\|\div_x \bv  \|^q_{L^q(0,T;L^q(\Gamma_{\textup{in}}))}\|\varrho_B\|^q_{L^\infty(0,T;L^\infty(\Gamma_{\textup{in}}))}. 		
	\end{aligned}
\end{equation}
We apply, similarly to the proof of Lemma~\ref{lemma::est_continuity}, Gronwall--Perov's inequality, which yield the desired estimate.

\end{proof}

Finally, it is worth mentioning that under the assumptions of Lemmas~\ref{lemma::est_continuity} and \ref{lemma::est_der}, the solution $\varrho$ constructed through the methods of characteristics is a strong solution. Indeed, given that
\[\pdt \varrho = - \bv \cdot \nabla_x \varrho  -\varrho \div_x \bv\] weakly, as a corollary of Lemmas~\ref{lemma::est_continuity} and \ref{lemma::est_der}, we can conclude that \[\pdt \varrho \in L^p(0,T; L^\infty(\Omega))  \cap L^\infty(0,T; L^q(\Omega)). \]
\subsection{Parabolic estimate for the velocity field }
When fixing the other quantities, the estimates for the velocity ($\bu$), temperature ($\theta$), and magnetic ($\bB$) fields are applications of the $L^p$--$L^q$ theory \cite[Theorem 2.3]{denk2007optimal} and are given in the Propositions~\ref{prop::estimate_lin_velocity_field}, \ref{prop::estimate_lin_temp_field}, and \ref{prop::linearized_mag} below. The idea is to combine these estimates with a fixed-point argument to show the well-posedness of the system~\eqref{eq:energy_equation_theta} supplemented with boundary data~\eqref{BC:velocity}--\eqref{BC:magneticfield} and initial data~\eqref{Initial_data}.

For the convenience of the reader, we recall here the linearized momentum equation:
\begin{equation}\label{eq::linearized_velocity}
	\begin{aligned}
	\pdt \bu + \bu_0 \cdot \nabla_x\bu - \frac1{\varrho_0} \div_x (\SS(\bu))=  & \, \left(\frac1{\varrho} - \frac1{\varrho_0}\right) \div_x (\SS(\bu)) + (\bu_0-\bv) \cdot \nabla_x \bu  \\ & \,+  \frac1{\varrho} \curl_x\bB \times \bB-\nabla_x \theta - \theta \nabla_x \log(\varrho) + \nabla_x G,\\
	\bu(0) = &\bu_0,\\
	\bu|_{\partial\Omega} =& \bu_B.
	\end{aligned}
\end{equation}
where $\bv$, $\theta$, $\bB$, and the gravity potential $G$ are fixed. The initial conditions $\bu_0$, $\varrho_0$ were introduced to avoid the dependence of the well-posedness constants on the variables of the fixed-point operator.
\begin{proposition}\label{prop::estimate_lin_velocity_field}
	Let $T>0$. Let $(p,q) \in (1,\infty)\times (d,\infty)$. Let $\varrho \in L^1_{\infty,q}(0,T)$ with $\varrho\geq r_0>0$ on $[0,T]\times \overline\Omega$ and $\varrho(0) = \varrho_0$ on $\Omega$. Let $\bB,\bv,\theta \in L^2_{p,q}(0,T)$, $G \in L^p(0,T;W^{1,q}(\Omega))$, and let the initial data and boundary data verify~\eqref{data_velocity}.
We further assume that $\bv (0) = \bu_0.$

Then there exists a unique $\bu \in  L^2_{p,q}(0,T) $ which solves \eqref{eq::linearized_velocity}. Furthermore, there exists a constant $\Lambda$ which may depend on $\varrho_0$, $\bu_0$, $p$, $q$ and $T$ such that 
\begin{equation}\label{est::prop_lin_velocity}
	\begin{aligned}
		\int_0^\tau \big[\| \pdt \bu \|^p_{L^q(\Omega)} + & \|\bu\|_{W^{2,q}(\Omega)}^p \big]\dt   \\ \leq & \Lambda\Bigg[ \left \|\frac{1}{\varrho} \curl_x\bB \times \bB \right \|_{L^p(0,\tau; L^q(\Omega))}^p + \|\nabla_x \theta \|^p_{L^p(0,\tau; L^q(\Omega))} \\ &+ \|\theta \nabla_x \log(\varrho) \|^p_{L^p(0,\tau; L^q(\Omega))} + \|G\|^p_{L^p(0,\tau;W^{1,q}(\Omega))}\\ &+  \|\bu_0\|_{B^{2(1-1/p)}_{qp}(\Omega)}^p + \| \bu_B\|^p_{F^{1-1/2q}_{pq}(0,\tau;L^q(\partial\Omega)) \cap L^p (0,\tau; W^{2-1/q,q}(\partial \Omega))}\Bigg],
	\end{aligned}
\end{equation}
holds for all $\tau\in [0,T_{\mathrm{eff},1}\wedge T]$, where $T_{\mathrm{eff},1}>0$ depends on $\Lambda$, $\|\frac1\varrho\|_{W^{1,p}(0,T;L^q(\Omega))}$, $\|\bv\|_{W^{1,p}(0,T;L^q(\Omega))}$, and various embedding constants.
\end{proposition}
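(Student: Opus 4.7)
The plan is to treat \eqref{eq::linearized_velocity} as a linear parabolic problem in which the operator on the left-hand side has its coefficients frozen at $t=0$, namely $-\frac1{\varrho_0}\div_x\SS(\cdot)$ together with the fixed drift $\bu_0\cdot\nabla_x$, and to regard every $\bu$-dependent contribution on the right as a perturbative forcing that can be absorbed for sufficiently short times. First I would invoke the $L^p$-$L^q$ maximal regularity result \cite[Theorem~2.3]{denk2007optimal}. The hypotheses $\varrho_0\in W^{1,q}(\Omega)$ with $q>d$ and $\varrho_0\geq r_0>0$ give, via Morrey's embedding, $1/\varrho_0\in W^{1,q}(\Omega)\cap C(\overline{\Omega})$, so $-\frac1{\varrho_0}\div_x\SS(\cdot)$ is uniformly elliptic with continuous leading coefficients; the drift coefficient $\bu_0\in B^{2(1-1/p)}_{qp}(\Omega)\hookrightarrow C(\overline{\Omega})$ by \eqref{embedding_Besov_cont} is bounded. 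This furnishes, for any $F\in L^p(0,T;L^q(\Omega))$, a unique solution $\bu\in L^2_{p,q}(0,T)$ of the frozen-coefficient problem together with the estimate
\begin{equation*}
\|\pdt\bu\|_{L^p(0,\tau;L^q)}+\|\bu\|_{L^p(0,\tau;W^{2,q})} \leq \Lambda\bigl(\|F\|_{L^p(0,\tau;L^q)}+\|\bu_0\|_{B^{2(1-1/p)}_{qp}}+\|\bu_B\|_{F^{1-1/2q}_{pq}(0,\tau;L^q(\partial\Omega))\cap L^p(0,\tau;W^{2-1/q,q}(\partial\Omega))}\bigr),
\end{equation*}
with $\Lambda$ depending only on $\varrho_0$, $\bu_0$, $p$, $q$, $T$.

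The actual source in \eqref{eq::linearized_velocity} decomposes as $F=F_{\textup{data}}+R_1+R_2$, where $F_{\textup{data}}$ collects the terms $\frac1{\varrho}\curl_x\bB\times\bB$, $-\nabla_x\theta$, $-\theta\nabla_x\log\varrho$ and $\nabla_x G$ and produces precisely the right-hand side of \eqref{est::prop_lin_velocity}, whereas
\begin{equation*}
R_1=\Bigl(\tfrac1{\varrho}-\tfrac1{\varrho_0}\Bigr)\div_x\SS(\bu),\qquad R_2=(\bu_0-\bv)\cdot\nabla_x\bu
\end{equation*}
depend on $\bu$ and must be absorbed. The crucial observation is that both $\varrho-\varrho_0$ and $\bv-\bu_0$ vanish at $t=0$: indeed $\varrho\in L^1_{\infty,q}(0,T)\hookrightarrow C([0,T];W^{1,q}(\Omega))\hookrightarrow C([0,T];C(\overline{\Omega}))$ since $q>d$, and $\bv\in L^2_{p,q}(0,T)\hookrightarrow C([0,T];B^{2(1-1/p)}_{qp}(\Omega))\hookrightarrow C([0,T];C(\overline{\Omega}))$ by \eqref{reg_trace_v}--\eqref{embedding_Besov_cont}. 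Consequently there exist moduli of continuity $\omega_1,\omega_2\colon[0,T]\to[0,\infty)$ with $\omega_i(\tau)\downarrow 0$ as $\tau\downarrow 0$, quantifiable in terms of $\|1/\varrho\|_{W^{1,p}(0,T;L^q)}$ and $\|\bv\|_{W^{1,p}(0,T;L^q)}$ respectively, such that
\begin{equation*}
\bigl\|\tfrac1{\varrho}-\tfrac1{\varrho_0}\bigr\|_{L^\infty((0,\tau)\times\Omega)}\leq \omega_1(\tau),\qquad \|\bu_0-\bv\|_{L^\infty((0,\tau)\times\Omega)}\leq \omega_2(\tau),
\end{equation*}
and hence $\|R_1\|_{L^p(0,\tau;L^q)}+\|R_2\|_{L^p(0,\tau;L^q)}\leq C(\omega_1(\tau)+\omega_2(\tau))\|\bu\|_{L^p(0,\tau;W^{2,q})}$.

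To conclude, I would insert this bound into the maximal regularity inequality and choose $T_{\mathrm{eff},1}\in(0,T]$ small enough that $\Lambda\,C(\omega_1(T_{\mathrm{eff},1})+\omega_2(T_{\mathrm{eff},1}))\leq \tfrac12$, which allows $R_1$ and $R_2$ to be absorbed into the left-hand side and yields exactly \eqref{est::prop_lin_velocity} for every $\tau\leq T_{\mathrm{eff},1}$. Uniqueness then follows by applying the same estimate to the difference of two solutions, which solves the homogeneous problem with vanishing data. The main obstacle I anticipate is making the absorption step quantitatively rigorous: one must ensure that the smallness window $T_{\mathrm{eff},1}$ depends only on the norms listed in the statement and on embedding constants (not on $\bu$ itself), and one must verify that the operator $-\frac1{\varrho_0}\div_x\SS(\cdot)$ with merely $W^{1,q}$-coefficients fits into the Denk--Hieber--Pr\"uss framework, which is precisely why the threshold $q>d$ is built into the hypotheses.
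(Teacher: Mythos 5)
Your proposal follows essentially the same route as the paper: freeze the parabolic operator's coefficients at $t=0$, invoke the Denk--Hieber--Pr\"uss maximal $L^p$-$L^q$ regularity theorem, split the forcing into the data terms and the two $\bu$-dependent remainders $R_1,R_2$, and absorb the latter for small time. The only substantive difference is how the smallness of $R_2$ is quantified -- the obstacle you flag at the end: rather than estimating $\|\bu_0-\bv\|_{L^\infty((0,\tau)\times\Omega)}$ via an abstract modulus of continuity, the paper bounds
\begin{equation}
\|(\bu_0-\bv)\cdot\nabla_x\bu\|_{L^q(\Omega)}\le\|\bu_0-\bv\|_{L^q(\Omega)}\,\|\nabla_x\bu\|_{L^\infty(\Omega)},
\end{equation}
uses $W^{1,p}(0,T;L^q(\Omega))\hookrightarrow C^{0,1-1/p}([0,T];L^q(\Omega))$ to get $\|\bu_0-\bv(t)\|_{L^q(\Omega)}\lesssim t^{1-1/p}\|\bv\|_{W^{1,p}(0,T;L^q(\Omega))}$ directly (no interpolation in space), and controls $\|\nabla_x\bu\|_{L^\infty(\Omega)}\lesssim\|\bu\|_{W^{2,q}(\Omega)}$ via $W^{1,q}\hookrightarrow L^\infty$ for $q>d$; this makes $T_{\mathrm{eff},1}$ depend explicitly on $\|\bv\|_{W^{1,p}(0,T;L^q(\Omega))}$ and $\|1/\varrho\|_{W^{1,p}(0,T;L^q(\Omega))}$ as stated, which your $\omega_i$-based version would also need, but only after an extra space--time interpolation to pass from $L^q$ to $L^\infty$ in the small factor.
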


\begin{proof}
	As announced before, we plan to use \cite[Theorem 2.3]{denk2007optimal}, therefore we need to verify its assumptions. First, the regularity assumption on the coefficients of the differential operator on the left-hand-side of \eqref{eq::linearized_velocity} are verified thanks to the embedding 
	$$ W^{1,q}(\Omega)\hookrightarrow C(\overline \Omega), \qquad \textrm{for } q>d,$$ 
	and the strict positivity of $\varrho_0\geq r_0>0$.
	The ellipticity of the differential operator is ensured through the positivity and nonnegativity assumption on the viscosity coefficients $\mu>0$ and $\lambda\geq0$ in the definition of the operator $\SS(\bu)$ in~\eqref{eq::viscosity_thing} together with $\frac1{\varrho_0}>0$. We thus obtain the estimate
	\begin{equation}\label{velocity_first_ineq}
		\begin{aligned}
			\int_0^T \big[\| \pdt \bu \|^p_{L^q(\Omega)} + & \|\bu\|_{W^{2,q}(\Omega)}^p \big]\dt   \\ \leq & \Lambda(\varrho_0,\bu_0, p, q,T)  \Bigg[ \int_0^T \left\|  \left(\frac1{\varrho} - \frac1{\varrho_0}\right) \div_x (\SS(\bu)) \right\|_{L^q(\Omega)}^p \dt  + \int_0^T \|(\bu_0 - \bv ) \cdot \nabla_x \bu\|^p_{L^q(\Omega)}  \dt\\ &+ \left \|\frac{1}{\varrho} \curl_x\bB \times \bB \right \|_{L^p(0,T; L^q(\Omega))}^p + \|\nabla_x \theta \|^p_{L^p(0,T; L^q(\Omega))} \\ &+ \|\theta \nabla_x \log(\varrho) \|^p_{L^p(0,T; L^q(\Omega))} + \|G\|^p_{L^p(0,T;W^{1,q}(\Omega))}\\ &+  \|\bu_0\|_{B^{2(1-1/p)}_{qp}(\Omega)}^p + \| \bu_B\|^p_{F^{1-1/2q}_{pq}(0,T;L^q(\partial\Omega)) \cap L^p (0,T; W^{2-1/q,q}(\partial \Omega))} \Bigg]
		\end{aligned}
	\end{equation}
The constant $\Lambda(\varrho_0,\bu_0, p, q, T)$ depends in general on the ellipticity constant of the differential operator (which depends on $\varrho_0$) and on the amplitude of the lower-order terms (which depend here on $\bu_0$), see, e.g., \cite[Chapter 7]{krylov2024lectures} and \cite{kim2007parabolic}. 

We need to show that we can absorb the terms which depend on $\bu$ by the left-hand side. First, let us start with the term:
\begin{equation}\label{Holder-cont-velocity}
\begin{aligned}
 \int_0^T \|(\bu_0 - \bv ) \cdot \nabla_x \bu\|^p_{L^q(\Omega)}  \dt \leq& \int_0^T \|(\bu_0 - \bv )\|_{L^q(\Omega)}^p \| \nabla_x \bu\|^p_{L^\infty(\Omega)}\dt \\ \leq & C_{\Omega,q}\int_0^T \|(\bu_0 - \bv )\|_{L^q(\Omega)}^p \|\bu\|_{W^{2,q}(\Omega)}^p \dt,
\end{aligned} 
\end{equation}
where $C_{\Omega,q}$ is an embedding constant. 
Now, we use that $\bv \in W^{1,p}(0,T;L^q(\Omega)) \hookrightarrow C^{0,1-1/p}([0,T];L^q(\Omega))$, such that writing 
\[\bu (0) - \bv(t) = \int_0^t \partial_s \bv(s) \,\textup{d}s, \]
and applying H\"older's inequality, we know that: 
\[\|\bu (0) - \bv(t)\|_{L^q(\Omega)} \leq t^{\frac{p-1}{p}} \|\bv\|_{W^{1,p}(0,t;L^q(\Omega))}.\]
Thus, going back to \eqref{Holder-cont-velocity}, we obtain 
\begin{align}
	\int_0^T \|(\bu_0 - \bv ) \cdot \nabla_x \bu\|^p_{L^q(\Omega)}  \dt \leq& C_{\Omega,q}  T^{p-1} \|\bv\|_{W^{1,p}(0,T;L^q(\Omega))}^p \int_0^T  \|\bu\|_{W^{2,q}(\Omega)}^p\dt
\end{align} 
Thus, eventually by reducing the final time $T$ to $T_{\mathrm{eff},1}$, we can absorb the above term by the left side of \eqref{velocity_first_ineq}.
Analogously, we treat the term 
\begin{equation}
	\int_0^T \left\|  \left(\frac1{\varrho} - \frac1{\varrho_0}\right) \div_x (\SS(\bu)) \right\|_{L^q(\Omega)}^p \dt \leq C_{\Omega,q}  T^{p-1} \left\|\frac 1\varrho\right\|_{W^{1,p}(0,T;L^q(\Omega))}^p \int_0^T  \|\bu\|_{W^{2,q}(\Omega)}^p\dt,
\end{equation}
which yields the desired estimate~\eqref{est::prop_lin_velocity}.
\end{proof}

\begin{remark}[Characterization of $F^\kappa_{pq}(0,T;L^q(\partial\Omega))$]
	For a characterization of $F^\kappa_{pq}(0,T;L^q(\partial\Omega))$ used in Proposition~\ref{prop::estimate_lin_velocity_field} for the description of the function space of boundary condition, we refer the reader to \cite[Lemma 6.2]{denk2007optimal} and \cite[Theorem 2.4.1]{triebel1992theory}.
\end{remark}

\subsection{Parabolic estimate for the temperature}
The goal of this section is to establish an $L^p$--$L^q$ estimate for the linearized temperature equation.
Going forward, we fix, without loss of generality, the positive constants
$\kappa,c_v,\xi = 1 $, to simplify notation.

We recall that the linearized (i.e., the velocity field $\bv$ is fixed) initial boundary value problem for the temperature is given by:
\begin{equation}\label{eq::lin_temp}
	\begin{aligned}
		&\pdt \theta + \bv\cdot \nabla_x\theta - \frac{1}{\varrho} \Delta_x \theta = \frac{1}{\varrho} \SS(\bv):\mathbb{D}_x \bv + \frac{1}{\varrho} |\curl_x \bB|^2 - \theta \div_x \bv  \\
		&\theta(0) = \theta_0\\
		& \theta|_{\partial\Omega}=\theta_B
	\end{aligned}
\end{equation}

\begin{proposition}\label{prop::estimate_lin_temp_field}
	Let $T>0$. Let $p,q\in  (1,\infty) \times (d,\infty)$ which verify $p>\frac{2q}{2q-d}$. Let $\varrho \in L^1_{\infty,q}(0,T)$ with $\varrho\geq r_0>0$ on $[0,T]\times \overline\Omega$ and $\varrho(0) = \varrho_0$ on $\Omega$. Let $\bB,\bv \in L^2_{p,q}(0,T)$ with $\bv(0) = \bu_0$ on $\Omega$, $f \in L^p(0,T;L^q(\Omega))$, and let the initial data and boundary data verify
	\eqref{data_temp}.
	
	Then, there exist a unique $\theta \in L^2_{p,q}(0,T) $ which solves \eqref{eq::lin_temp}. Furthermore, there exists a constant $\Lambda$ which may depend on $\varrho_0$, $\bu_0$, $p$, $q$ and $T$ such that 
	\begin{equation}\label{ineq::estimate::temp}
	\begin{aligned}
		\int_0^\tau \big[\| \pdt \theta \|^p_{L^q(\Omega)} + & \|\theta\|_{W^{2,q}(\Omega)}^p \big]\dt   \\ \leq & \Lambda(\varrho_0,\bu_0, p, q,T)  \Bigg[ \left \|\frac{1}{\varrho} \SS(\bv):\mathbb{D}_x \bv \right \|_{L^p(0,\tau; L^q(\Omega))}^p + \left \|\frac{1}{\varrho} |\curl_x \bB|^2 \right \|_{L^p(0,\tau; L^q(\Omega))}^p  \\ & +  \|\theta_0\|_{B^{2(1-1/p)}_{qp}(\Omega)}^p + \| \theta_B\|^p_{F^{1-1/2q}_{pq}(0,\tau;L^q(\partial\Omega)) \cap L^p (0,\tau; W^{2-1/q,q}(\partial \Omega))} 
		\Bigg],
	\end{aligned}
	\end{equation}
holds for all $\tau\in [0,T_{\mathrm{eff},2}\wedge T]$, where $T_{\mathrm{eff},2}>0$ depends on $\Lambda$, $\|\frac1\varrho\|_{W^{1,p}(0,T;L^q(\Omega))}$, $\|\bv\|_{W^{1,p}(0,T;L^q(\Omega))}$, and various embedding constants.
\end{proposition}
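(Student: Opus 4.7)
The plan is to mirror the strategy of Proposition~\ref{prop::estimate_lin_velocity_field}: freeze the principal coefficients at $t=0$, apply the maximal $L^p$--$L^q$ parabolic regularity of~\cite[Theorem 2.3]{denk2007optimal}, and then absorb the remaining $\theta$-dependent contributions through smallness in the final time.

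Concretely, I would rewrite~\eqref{eq::lin_temp} as
\begin{equation*}
\pdt \theta + \bu_0\cdot \nabla_x \theta - \frac{1}{\varrho_0}\Delta_x \theta = \left(\frac{1}{\varrho}-\frac{1}{\varrho_0}\right)\Delta_x \theta + (\bu_0 - \bv)\cdot \nabla_x \theta - \theta \div_x \bv + \frac{1}{\varrho}\SS(\bv):\mathbb{D}_x \bv + \frac{1}{\varrho}|\curl_x \bB|^2,
\end{equation*}
with the same initial and boundary data. Ellipticity is guaranteed by $\varrho_0 \geq r_0>0$, and the regularity of the frozen coefficients holds via $W^{1,q}(\Omega)\hookrightarrow C(\overline\Omega)$ together with the Besov embedding~\eqref{embedding_Besov_cont}, both of which require $q>d$ and $p>\tfrac{2q}{2q-d}$. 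Invoking~\cite[Theorem 2.3]{denk2007optimal} produces a preliminary bound structurally identical to~\eqref{velocity_first_ineq}. The first two remainder terms ($\bigl(\tfrac{1}{\varrho}-\tfrac{1}{\varrho_0}\bigr)\Delta_x\theta$ and $(\bu_0 - \bv)\cdot\nabla_x\theta$) are absorbed verbatim as in the proof of Proposition~\ref{prop::estimate_lin_velocity_field}, exploiting the H\"older regularity in time $\bv,\frac{1}{\varrho}\in C^{0,1-1/p}([0,T];L^q(\Omega))$ matched with their initial values, and the embedding $W^{2,q}(\Omega)\hookrightarrow W^{1,\infty}(\Omega)$ to move $\nabla_x\theta$ into $L^\infty$.

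The genuinely new ingredient, which I expect to be the main obstacle, is the zeroth-order term $\theta\div_x\bv$. Here I would estimate
\begin{equation*}
\|\theta \div_x \bv\|_{L^p(0,\tau;L^q(\Omega))} \leq \|\theta\|_{L^\infty(0,\tau;L^\infty(\Omega))}\,\|\div_x \bv\|_{L^p(0,\tau;L^q(\Omega))},
\end{equation*}
and control the first factor through the trace/embedding chain
\begin{equation*}
L^2_{p,q}(0,\tau)\hookrightarrow C\bigl([0,\tau];B^{2(1-1/p)}_{qp}(\Omega)\bigr)\hookrightarrow C\bigl([0,\tau];L^\infty(\Omega)\bigr),
\end{equation*}
where the last step is~\eqref{embedding_Besov_cont}. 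This yields $\|\theta\|_{L^\infty(0,\tau;L^\infty(\Omega))}\lesssim \|\theta_0\|_{B^{2(1-1/p)}_{qp}(\Omega)} + \|\theta\|_{L^2_{p,q}(0,\tau)}$, with constants uniform for $\tau\leq T$ (obtained by extending $\theta$ beyond $\tau$ and using the fixed-interval trace theorem). Since $\bv\in L^p(0,T;W^{2,q}(\Omega))$, absolute continuity of the Lebesgue integral gives $\|\div_x \bv\|_{L^p(0,\tau;L^q(\Omega))}\to 0$ as $\tau\to 0$, so choosing $T_{\mathrm{eff},2}$ small enough allows the $\|\theta\|_{L^2_{p,q}(0,\tau)}$-piece of the product to be absorbed into the left-hand side. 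Collecting the remaining contributions delivers~\eqref{ineq::estimate::temp}. Uniqueness follows by applying the same estimate to the homogeneous problem satisfied by the difference of two solutions.
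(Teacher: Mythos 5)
Your strategy is essentially the paper's: freeze the principal coefficients at $t=0$, invoke the maximal $L^p$--$L^q$ regularity of Denk--Hieber--Pr\"uss, and absorb the $\theta$-dependent remainders by taking the final time small. The handling of $\bigl(\tfrac1\varrho-\tfrac1{\varrho_0}\bigr)\Delta_x\theta$ and $(\bu_0-\bv)\cdot\nabla_x\theta$ matches the paper.

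Your treatment of the zeroth-order term $\theta\div_x\bv$ takes a different H\"older split. You use $\|\theta\div_x\bv\|_{L^pL^q}\leq\|\theta\|_{L^\infty_t L^\infty_x}\|\div_x\bv\|_{L^p(0,\tau;L^q)}$, then invoke the trace embedding $L^2_{p,q}\hookrightarrow C([0,\tau];B^{2(1-1/p)}_{qp})\hookrightarrow C([0,\tau];L^\infty)$ and absolute continuity of the $\bv$-factor. The paper instead splits as $\|\theta\|_{L^\infty(0,\tau;L^q)}\,\|\div_x\bv\|_{L^p(0,\tau;L^\infty)}$, bounds the second factor by the Sobolev embedding $W^{2,q}\hookrightarrow L^\infty$, and obtains an explicit $\tau^{1-1/p}$ factor from $\|\theta(\cdot)-\theta_0\|_{L^q}\leq\tau^{1-1/p}\|\pdt\theta\|_{L^p(0,\tau;L^q)}$. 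The paper's split is the cleaner choice: it yields a quantitative rate in $\tau$, whereas your argument produces smallness only qualitatively and the resulting $T_{\mathrm{eff},2}$ depends on the modulus of absolute continuity of $t\mapsto\|\div_x\bv(t)\|_{L^q}^p$ --- i.e.\ on $\bv$ itself rather than on the norm $\|\bv\|_{W^{1,p}(0,T;L^q)}$ advertised in the statement. For the purposes of the fixed-point argument both versions suffice, but the paper's version is preferable.

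There is, however, a genuine gap. To apply \cite[Theorem~2.3]{denk2007optimal} at all --- and therefore to obtain any solution $\theta\in L^2_{p,q}(0,T)$ --- one must first verify that the inhomogeneous data of the parabolic problem lie in the admissible class, in particular that
\begin{equation}
\frac1\varrho\,\SS(\bv):\mathbb{D}_x\bv \in L^p(0,T;L^q(\Omega))
\qquad\text{and}\qquad
\frac1\varrho\,|\curl_x\bB|^2 \in L^p(0,T;L^q(\Omega)).
\end{equation}
This is not automatic: $\SS(\bv)$ and $\mathbb{D}_x\bv$ a priori live only in $L^p(0,T;W^{1,q})\cap C([0,T];B^{1-2/p}_{qp})$, and a quadratic product of such objects requires an interpolation/embedding argument. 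The paper establishes precisely this via the chain
\begin{equation}
C([0,T];B^{1-2/p}_{qp}(\Omega))\cap L^p(0,T;W^{1,q}(\Omega))\hookrightarrow L^{2p}(0,T;B^{1-1/p}_{qp}(\Omega))\hookrightarrow L^{2p}(0,T;L^{2q}(\Omega)),
\end{equation}
and it is exactly here --- not only in the Besov embedding $B^{2(1-1/p)}_{qp}\hookrightarrow C(\overline\Omega)$ --- that the hypothesis $p>\tfrac{2q}{2q-d}$ is spent, ensuring $1-1/p>\tfrac{d}{2q}$. Your proof records the hypothesis but never deploys it on the source terms, and without this step the proposition's conclusion does not follow. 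You should add the verification that these quadratic source terms are in $L^p(0,T;L^q(\Omega))$ before invoking maximal regularity.
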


\begin{proof}
	
 Let $\tau\in [0,T]$. Similarly to the proof of Proposition~\ref{prop::estimate_lin_velocity_field}, we apply \cite[Theorem 2.3]{denk2007optimal}, which yields the existence of a unique solution $ \theta \in  L^2_{p,q}(0,T)$ to \eqref{eq::lin_temp}, we then use a reformulation with frozen-in-time coefficients ($\frac1\varrho_0$, $\bu_0$) on the left-hand-side of~\eqref{eq::lin_temp}, to deduce the following estimate:
	\begin{equation}\label{temp_first_ineq}
		\begin{aligned}
			\int_0^\tau \big[\| \pdt \theta \|^p_{L^q(\Omega)} + & \|\theta\|_{W^{2,q}(\Omega)}^p \big]\dt   \\ \leq & \Lambda(\varrho_0,\bu_0, p, q,T)  \Bigg[ \int_0^\tau \left\|  \left(\frac1{\varrho} - \frac1{\varrho_0}\right) \Delta_x \theta \right\|_{L^q(\Omega)}^p \dt  +\int_0^\tau \|(\bu_0 - \bv ) \cdot \nabla_x \theta\|^p_{L^q(\Omega)}  \dt\\ &+ \left \|\frac{1}{\varrho} \SS(\bv):\mathbb{D}_x \bv \right \|_{L^p(0,\tau; L^q(\Omega))}^p + \left \|\frac{1}{\varrho} |\curl_x \bB|^2 \right \|_{L^p(0,\tau; L^q(\Omega))}^p  \\ & + \|\theta \div_x \bv \|^p_{L^p(0,\tau; L^q(\Omega))}\\  & +  \|\theta_0\|_{B^{2(1-1/p)}_{qp}(\Omega)}^p + \| \theta_B\|^p_{F^{1-1/2q}_{pq}(0,\tau;L^q(\partial \Omega)) \cap L^p (0,\tau; W^{2-1/q,q}(\partial \Omega))} \Bigg]
		\end{aligned}
	\end{equation}
	Similarly to the proof of Proposition~\ref{prop::estimate_lin_velocity_field}, the terms \[ \int_0^T \left\|  \left(\frac1{\varrho} - \frac1{\varrho_0}\right) \Delta_x \theta \right\|_{L^q(\Omega)}^p \dt  +\int_0^T \|(\bu_0 - \bv ) \cdot \nabla_x \theta\|^p_{L^q(\Omega)} \dt,\]
	are absorbed by exploiting the H\"older continuity of the function $\bv$ and $\frac 1 \varrho$ reducing if needed the final time to an effective final time denoted $T_{\mathrm{eff},2}$.
	
	We also obtain that we can control the term 
	\begin{equation}
		\|\theta \div_x \bv \|_{L^p(0,\tau; L^q(\Omega))} \leq \|\theta\|_{L^\infty(0,\tau;L^q(\Omega))} \|\div_x \bv\|_{L^p(0,\tau;L^\infty(\Omega))},
	\end{equation}
	by, if needed, reducing the final effective time by noticing that 
	\begin{equation}
		\|\theta\|_{L^\infty(0,\tau;L^q(\Omega))} \leq \|\theta_0\|_{L^q(\Omega)}+\|\pdt \theta\|_{L^1(0,\tau; L^q(\Omega))} \leq \|\theta_0\|_{L^q(\Omega)}+\tau^{1-1/p}\|\pdt \theta\|_{L^p(0,\tau; L^q(\Omega))}.
	\end{equation}
	Finally, we show that the assumed regularity on $\frac 1\varrho$, $\bv$, and $\bB$ allows us to control the terms:
	\[\left \|\frac{1}{\varrho} \SS(\bv):\mathbb{D}_x \bv \right \|_{L^p(0,\tau; L^q(\Omega))}^p,\qquad \left \|\frac{1}{\varrho} |\curl_x \bB|^2 \right \|_{L^p(0,\tau; L^q(\Omega))}^p.\]
	To this end, we illustrate our argument on the first term, the second follows similarly. We use the regularity of $\bv$ (see also \eqref{reg_trace_v}) to claim that 
	\begin{equation}\label{eq:embedding_of_interest}
		\SS(\bv) \in C([0,T]; B^{1-2/p}_{qp}(\Omega)) \cap L^p(0,T; W^{1,q}(\Omega)) \hookrightarrow L^{2p}(0,T; B^{1-1/p}_{qp}(\Omega)) \hookrightarrow L^{2p}(0,T; L^{2q}(\Omega)),
	\end{equation}
	where the first embedding is due to the real interpolation with parameters $1/2$ and $p$, the second embedding holds on account of the condition $p>\frac{2q}{2q-d}$; see \cite[Chapter 3]{triebel1983theory}. The same holds true for $\mathbb{D}_x \bv$ which then also belong to the space $L^{2p}(0,T; L^{2q}(\Omega))$. We then conclude that:
	\begin{equation}
		\left \|\frac{1}{\varrho} \SS(\bv):\mathbb{D}_x \bv \right \|_{L^p(0,\tau; L^q(\Omega))}^p \leq \left\|\frac1\varrho \right\|_{L^\infty(0,\tau; L^\infty(\Omega))}^p \|\SS(\bv)\|_{L^{2p}(0,T;L^{2q}(\Omega))}^p \|\mathbb D_x \bv\|_{L^{2p}(0,T;L^{2q}(\Omega))}^p.
	\end{equation}
	Putting the estimates above together we obtain \eqref{ineq::estimate::temp}.
	
\end{proof}
\subsection{Parabolic estimate for the induction equation}
As discussed in \cite[Section 2.2]{feireisl2025conditional}, the magnetic field equation can be transformed into an unconstrained one (i.e., without an explicit constraint $\div_x \bB =0$ on $\Omega$):
\begin{equation}\label{eq::reformulated_magnetics}
	\begin{aligned}
		\pdt \bB - \Delta_x \bB = \curl_x(\bv\times \bB), &\qquad \textrm{on } \ (0,T) \times \Omega, \\
		\div_x \bB = 0, \quad \bv \times n =b_1  &\qquad \textrm{on } \ (0,T) \times \partial\Omega, \\
		\bB (0) = \bB_0 &\qquad \textrm{on } \ \Omega,
	\end{aligned}
\end{equation}
where we set, without loss of generality, the constant coefficient $\xi=1$.
The constraint $\div_x \bB =0$ is directly inherited from the initial condition if $\div_x \bB_0 =0$. 

Kozono and Yaganisawa~\cite[Lemma 4.4]{kozono2009lr} showed that the operator $-\Delta_x$ endowed with the boundary conditions prescribed in \eqref{eq::reformulated_magnetics} is uniformly elliptic and satisfies the Lopatinskii--Shapiro (complementing) condition such that the Agmon--Douglis--Nirenberg theory~\cite{agmon1964estimates} can be applied. It can readily be shown that the combination of $-\Delta_x$ and the boundary conditions in \eqref{eq::reformulated_magnetics} also verifies the Lopatinskii--Shapiro condition for parabolic equations as given in \cite[Section 2]{denk2007optimal}.

\begin{proposition}\label{prop::linearized_mag}
	Let $T>0$. Let $p,q\in  (1,\infty) \times (d,\infty)$. Let $\bv \in L^2_{p,q}(0,T)$ with $\bv(0) = \bu_0$ on $\Omega$ and let the initial data and boundary data verify
	\eqref{data_mag}.
	
	Then the initial-boundary value problem \eqref{eq::reformulated_magnetics} admits a unique solution $\bB \in L^2_{p,q}(0,T)$. Furthermore, there exists a constant $\Lambda$ which may depend on $\bu_0$, $p$, $q$ and $T$  such that
		\begin{equation}\label{ineq::estimate::mag}
		\begin{aligned}
			\int_0^\tau \big[\| \pdt \bB \|^p_{L^q(\Omega)} + & \|\bB\|_{W^{2,q}(\Omega)}^p \big]\dt   \\ \leq & \Lambda(\bu_0, p, q,T)  \Big[  \|\bB_0\|_{B^{2(1-1/p)}_{qp}(\Omega)}^p \\  & + \| b_1\|^p_{F^{1-1/2q}_{pq}(0,T;L^q(\partial\Omega)) \cap L^p (0,T; W^{2-1/q,q}(\partial\Omega)))}, 
			\Bigg]
		\end{aligned}
	\end{equation}
holds for all $\tau \in [0,T\wedge T_{\mathrm{eff},3}]$, where $T_{\mathrm{eff},3}>0$ depends on $\Lambda$, $\|\bv\|_{W^{1,p}(0,T;L^q(\Omega))}$, and various embedding constants.
\end{proposition}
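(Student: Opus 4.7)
The plan is to follow the template of Propositions~\ref{prop::estimate_lin_velocity_field} and \ref{prop::estimate_lin_temp_field}: apply the maximal-regularity theorem of \cite[Theorem 2.3]{denk2007optimal} to the linear parabolic boundary value problem
\[
\pdt \bB - \Delta_x \bB = F, \qquad (\div_x \bB,\; \bB\times n)\big|_{\partial\Omega} = (0, b_1), \qquad \bB(0)= \bB_0,
\]
with $F := \curl_x(\bv\times \bB)$ regarded temporarily as a given source. The ellipticity of $-\Delta_x$ and the Lopatinskii--Shapiro condition for the boundary pair $(\div_x, (\cdot)\times n)$ are exactly the points recalled just above the statement via \cite{kozono2009lr,agmon1964estimates}, so Denk--Hieber--Pr\"uss applies directly and yields the base estimate
\[
\mathcal N_\tau(\bB) := \|\pdt \bB\|_{L^p(0,\tau; L^q(\Omega))} + \|\bB\|_{L^p(0,\tau; W^{2,q}(\Omega))} \lesssim \|F\|_{L^p(0,\tau; L^q(\Omega))} + \|\bB_0\|_{B^{2(1-1/p)}_{qp}(\Omega)} + \|b_1\|_{\mathcal Y},
\]
where $\mathcal Y$ denotes the boundary-data space appearing in \eqref{ineq::estimate::mag}. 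The whole task reduces to absorbing $\|F\|_{L^p(0,\tau; L^q)}$ into $\mathcal N_\tau(\bB)$ by shrinking $\tau$.

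To do so, I would expand, using $\div_x \bB = 0$,
\[
\curl_x(\bv\times\bB) = -(\div_x \bv)\,\bB + (\bB\cdot\nabla_x)\bv - (\bv\cdot\nabla_x)\bB.
\]
Since the hypothesis $p > 2q/(q-1)$ forces $p > 2$, i.e.\ $2(1-1/p) > 1$, and since $q > d$, the embeddings
\[
B^{2(1-1/p)}_{qp}(\Omega) \hookrightarrow W^{1,q}(\Omega) \cap L^\infty(\Omega), \qquad W^{2,q}(\Omega) \hookrightarrow W^{1,\infty}(\Omega),
\]
are both available; combined with the trace-type embedding $L^2_{p,q}(0,\tau) \hookrightarrow C([0,\tau]; B^{2(1-1/p)}_{qp}(\Omega))$ (cf.~\eqref{reg_trace_v} applied to $\bB$), this gives
\[
\|\bB\|_{L^\infty(0,\tau; W^{1,q}(\Omega) \cap L^\infty(\Omega))} \lesssim \|\bB_0\|_{B^{2(1-1/p)}_{qp}(\Omega)} + \mathcal N_\tau(\bB).
\]
The two terms containing a gradient of $\bv$ are then bounded by $\|\bB\|_{L^\infty L^\infty}\,\|\bv\|_{L^p(0,\tau; W^{2,q})}$, and the term containing a gradient of $\bB$ by $\|\bv\|_{L^\infty L^\infty}\, \tau^{1/p}\, \|\nabla_x \bB\|_{L^\infty(0,\tau; L^q)}$. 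Both prefactors vanish as $\tau \downarrow 0$: the first by absolute continuity of $\tau\mapsto \int_0^\tau \|\bv\|_{W^{2,q}}^p \dt$, the second trivially. Choosing $T_{\mathrm{eff},3}$ so that the resulting coefficient $\eta(\tau)$ satisfies $\Lambda\,\eta(T_{\mathrm{eff},3}) \leq \tfrac12$ allows us to absorb the $\mathcal N_\tau(\bB)$ contribution and yields \eqref{ineq::estimate::mag}. Uniqueness and existence on $[0, T_{\mathrm{eff},3}\wedge T]$ follow from a standard Banach fixed-point argument applied to the map $\tilde\bB \mapsto \bB$ which sends $\tilde\bB\in L^2_{p,q}(0,\tau)$ to the Denk--Hieber--Pr\"uss solution of the linear problem with source $\curl_x(\bv\times\tilde\bB)$; the contraction property is the same smallness estimate applied to the difference $\tilde\bB_1 - \tilde\bB_2$. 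Finally, the divergence-free constraint is automatic: $\phi := \div_x\bB$ satisfies the homogeneous Dirichlet heat equation with $\phi(0) = \div_x \bB_0 = 0$, hence $\phi \equiv 0$.

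The main obstacle I anticipate is the smallness bookkeeping. Unlike in Propositions~\ref{prop::estimate_lin_velocity_field} and \ref{prop::estimate_lin_temp_field}, the source $F = \curl_x(\bv\times \bB)$ carries no natural \emph{difference} structure of the form $\bv - \bu_0$ or $1/\varrho - 1/\varrho_0$ that would provide a $\tau$-small factor through H\"older continuity in time; the smallness must be extracted purely from the $\tau$-integrability of $\bv$ and $\nabla_x \bv$. This is tight because $(\bv\cdot \nabla_x)\bB$ is genuinely first-order in $\bB$, i.e.\ of the highest lower-order that the maximal-regularity estimate can tolerate, so the argument hinges crucially on having $2(1-1/p) > 1$ to place $\nabla_x \bB$ uniformly in $L^q(\Omega)$.
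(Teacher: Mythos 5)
Your proof is essentially correct and it fills in the detail that the paper deliberately omits (the paper only says the argument follows Propositions~\ref{prop::estimate_lin_velocity_field} and~\ref{prop::estimate_lin_temp_field}). That said, the route you take diverges somewhat from the template of those two propositions, and a couple of technical points deserve attention.

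First, the paper's pattern in Propositions~\ref{prop::estimate_lin_velocity_field} and~\ref{prop::estimate_lin_temp_field} is to \emph{freeze} the variable coefficient at $t=0$, i.e.\ move $\curl_x(\bu_0\times\bB)$ into the left-hand operator (a lower-order perturbation of $\partial_t-\Delta_x$ with time-independent coefficients, legitimate for \cite[Theorem 2.3]{denk2007optimal} since $\bu_0\in B^{2(1-1/p)}_{qp}\hookrightarrow L^\infty$) and treat $\curl_x((\bv-\bu_0)\times\bB)$ as the source. This \emph{does} produce a difference structure: expanding with $\div_x\bB=0$,
\begin{equation}
\curl_x\bigl((\bv-\bu_0)\times\bB\bigr) = -\bB\,\div_x(\bv-\bu_0) + (\bB\cdot\nabla_x)(\bv-\bu_0) - \bigl((\bv-\bu_0)\cdot\nabla_x\bigr)\bB,
\end{equation}
and the term $((\bv-\bu_0)\cdot\nabla_x)\bB$ is bounded by $\|\bv-\bu_0\|_{L^\infty L^q}\|\nabla_x\bB\|_{L^p L^\infty}\lesssim \tau^{1-1/p}\|\bv\|_{W^{1,p}L^q}\|\bB\|_{L^p W^{2,q}}$ exactly as in the velocity estimate, with no need for $p>2$. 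So your remark that there is ``no natural difference structure'' is not quite right; there is one, and it takes care of the worst (first-order in $\bB$) term. The remaining two terms, carrying a derivative of $\bv-\bu_0$, cannot be handled by H\"older continuity alone and require a separate smallness argument, which brings us back to a version of what you do.

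Second, in your direct approach (leaving all of $\curl_x(\bv\times\bB)$ on the right) the absorption hinges on two things you should make explicit. (i)~The bound $\|\bB\|_{L^\infty(0,\tau;W^{1,q}\cap L^\infty)}\lesssim\|\bB_0\|_{B^{2(1-1/p)}_{qp}}+\mathcal N_\tau(\bB)$ requires the trace embedding constant of $L^2_{p,q}(0,\tau)\hookrightarrow C([0,\tau];B^{2(1-1/p)}_{qp})$ to be controlled uniformly as $\tau\downarrow 0$; that is precisely the content of Lemmas~\ref{lemma:indep_for_homog} and \ref{lemma:increasing_in_time_embedding}, which you should cite. (ii)~You use $B^{2(1-1/p)}_{qp}\hookrightarrow W^{1,q}$, which requires $p>2$; this condition is not listed among the hypotheses of Proposition~\ref{prop::linearized_mag} but is implied by $p>\max\{2q/(q-1), 2q/(2q-d)\}$ in Theorem~\ref{theorem:main_thm}. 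Without $p>2$ the term $(\bB\cdot\nabla_x)\bv$ can still be handled by placing $\nabla_x\bv$ in $L^p(0,\tau;L^\infty)$ and $\bB$ in $L^\infty(0,\tau;L^q)$, which needs only $q>d$, and the smallness comes from the absolute continuity of $\tau\mapsto\int_0^\tau\|\bv\|_{W^{2,q}}^p\dt$. But then $T_{\mathrm{eff},3}$ depends on that modulus rather than on a power of $\|\bv\|_{L^2_{p,q}}$ alone, so you should note this in relation to the uniformity over the fixed-point ball in Section~\ref{Sec:Main_proof}.

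The remaining ingredients (Lopatinskii--Shapiro condition via \cite{kozono2009lr}, propagation of $\div_x\bB=0$ through the heat equation with zero data, the inner Banach fixed point for the full linear term) are correct and align with what the paper sketches.
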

\begin{proof}
	The proof follows along similar ideas to those used to prove Propositions~\ref{prop::estimate_lin_velocity_field} and~\ref{prop::estimate_lin_temp_field}. To avoid unnecessary repetitions we omit the details here.
\end{proof}
\section{Proof of Theorem~\ref{theorem:main_thm}}\label{Sec:Main_proof}
As announced earlier, our goal in this section is to show the local well-posedness of \eqref{eq:energy_equation_theta} supplemented with suitable boundary \eqref{BC:velocity}--\eqref{BC:magneticfield} and initial data \eqref{Initial_data}. To do so, we will use Banach's fixed-point theorem.
Let 
\begin{equation}\label{eq:T_eff_def}
	T_\mathrm{eff} = \min_{1\leq i\leq 3} \{T_{\mathrm{eff},i}\},
\end{equation}
where $T_{\mathrm{eff},i}$ ($1\leq i \leq 3$) are the effective final times from Propositions~\ref{prop::estimate_lin_velocity_field}, \ref{prop::estimate_lin_temp_field}, and \ref{prop::linearized_mag}.
Let $p,q$ satisfy the hypotheses of Lemma~\ref{lemma::est_der}. Consider the closed ball 
\begin{equation}
	\begin{aligned}
	\mathcal{B}(0,\tau) = \Big\{ & \varrho \in L^\infty(0,\tau; W^{1,q}(\Omega)) \cap W^{1,\infty}(0,\tau;L^q(\Omega)), \\&  \bu, \, \theta,\, \bB\, \in L^p(0,\tau; W^{2,q}(\Omega)) \cap W^{1,p}(0,\tau;L^q(\Omega)),\\& 
	\textrm{such that} \quad 
	\varrho(0) = \varrho_0,\, \bu(0) = \bu_0,\, \theta(0)= \theta_0,\,\bB(0)= \bB_0,\\
	& \|\varrho\|_{L^1_{\infty,q}(0,\tau)}\leq K_\varrho, \qquad \|\bu\|_{L^2_{p,q}(0,\tau)}\leq K_{\bu}, \qquad \|\bB\|_{L^2_{p,q}(0,\tau)} \leq K_{\bB}, \qquad \|\theta\|_{L^2_{p,q}(0,\tau)} \leq K_{\theta},\\&
	\varrho \geq r_0>0 \qquad \textrm{on }   [0,\tau]\times\overline{\Omega}  \Big\}. 
	\end{aligned}
\end{equation}
where the initial data $\varrho_0$, $\bu_0$, $\theta_0$, $\bB_0$ are in the spaces prescribed in \eqref{data_density}, \eqref{data_velocity}, \eqref{data_temp}, and \eqref{data_mag}, respectively.
The constants $K_\varrho$, $K_{\bu}$, $K_{\theta}$, $K_{\bB}$ and $r_0$ are positive real numbers to be fixed in the analysis below.

Note that the ball $\mathcal{B}(0,\tau)$ is not empty as the solution to the linear problem belongs to it for $K_\varrho$, $K_{\bu}$, $K_{\bB}$, $K_{\theta}$ large enough. In particular we need $K_{\bu}$ to be large compared to $\|G\|_{L^p(0,T;W^{1,q})}$, see \eqref{est::prop_lin_velocity}.
 
Next, consider the mapping $\mathcal F$, which for  $(\varrho_*, \, \bu_*, \, \theta_*, \, \bB_*) \in \mathcal{B}$ returns the solutions of linearized problems studied above in the following way:
\[\mathcal{F}(\varrho_*, \, \bu_*, \, \theta_*, \, \bB_*) = (\varrho(\bu_*),\, \bu(\varrho_*,\bu_*,\theta_*,\bB_*), \,\theta(\varrho_*,\bu_*,\bB_*), \, \bB(\bu_*) ).\]

\begin{proposition}[Self-mapping of $\mathcal{F}$]\label{prop::self_mapping}
	Let Assumptions~\ref{assu:domain} and \ref{assu:data} hold. Let $p,q$ verify the assumptions of Lemma~\ref{lemma::est_der}. Then, there exist large $K_{\bu}$, $K_{\varrho}$, $K_{\theta}$, $K_{\bB}$ and small $r_0$ and final time $T_*>0$ such that $\mathcal{F}$ is a self-mapping on $\mathcal{B}(0,T_*)$.
\end{proposition}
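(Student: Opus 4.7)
The strategy is a two-scale choice of parameters: first I fix $K_\varrho$, $K_\bu$, $K_\theta$, $K_\bB$ large enough that they absorb everything that depends only on the fixed initial-boundary data via the linear estimates of Lemmas~\ref{lemma::est_continuity}--\ref{lemma::est_der} and Propositions~\ref{prop::estimate_lin_velocity_field}--\ref{prop::linearized_mag}, and then shrink $T_*>0$ until the remaining, genuinely nonlinear contributions are bounded by a fraction (say $1/2$) of those same $K$'s. Fix $(\varrho_*,\bu_*,\theta_*,\bB_*)\in\mathcal B(0,\tau)$ and write $(\varrho,\bu,\theta,\bB)=\mathcal F(\varrho_*,\bu_*,\theta_*,\bB_*)$.

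For $\varrho$, I apply Lemma~\ref{lemma::est_der} with $\bv=\bu_*$. The velocity norms appearing there are all controlled by $K_\bu$, while the boundary contributions are governed by $\varrho_B$ and the trace of $\bu_*$ on $\Gamma_{\textup{in}}$, itself controlled by $K_\bu$ through the trace embedding $L^2_{p,q}(0,\tau)\hookrightarrow L^p(0,\tau;W^{2-1/q,q}(\partial\Omega))$. The Gronwall-type exponential in Lemma~\ref{lemma::est_der} tends to $1$ as $\tau\to0$ because the exponent is an integral in time, so for $\tau$ small enough $\|\varrho\|_{L^1_{\infty,q}(0,\tau)}\leq K_\varrho$. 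The positivity $\varrho\geq r_0$ is a direct consequence of the Lagrangian min-max bound~\eqref{ineq:max_min}: by shrinking $\tau$, the exponential $\exp(-\int_0^\tau\|\div_x\bu_*\|_{L^\infty}\dt)$ is as close to $1$ as needed, and we may take $r_0=\tfrac12\min\{\min_\Omega\varrho_0,\min_{\Gamma_{\textup{in}}}\varrho_B\}$, which is positive by Assumption~\ref{assu:data}. Note that $\|\div_x\bu_*\|_{L^1(0,\tau;L^\infty(\Omega))}\lesssim \tau^{1-1/p}K_\bu$ by the embedding $W^{2,q}(\Omega)\hookrightarrow W^{1,\infty}(\Omega)$ for $q>d$, so this smallness is quantitative.

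For $\bu$, $\theta$, $\bB$, I apply Propositions~\ref{prop::estimate_lin_velocity_field}, \ref{prop::estimate_lin_temp_field}, and \ref{prop::linearized_mag} with $\bv=\bu_*$. The data-dependent right-hand sides are bounded by constants of the form $C(\text{data})$, which I absorb by taking each $K_\bullet$ large. The remaining terms to control are the nonlinear couplings:
\begin{equation*}
\left\|\tfrac1\varrho\curl_x\bB_*\times\bB_*\right\|_{L^p(0,\tau;L^q)},\quad \left\|\theta_*\nabla_x\log\varrho\right\|_{L^p(0,\tau;L^q)},\quad \left\|\tfrac1\varrho\SS(\bu_*)\!:\!\mathbb D\bu_*\right\|_{L^p(0,\tau;L^q)},\quad \left\|\tfrac1\varrho|\curl_x\bB_*|^2\right\|_{L^p(0,\tau;L^q)}.
\end{equation*}
These are handled by the same argument used in the proof of Proposition~\ref{prop::estimate_lin_temp_field}: from $\bu_*,\bB_*\in L^2_{p,q}(0,\tau)$ and the trace/interpolation embedding \eqref{eq:embedding_of_interest}, their gradients lie in $L^{2p}(0,\tau;L^{2q}(\Omega))$, hence the bilinear terms lie in $L^p(0,\tau;L^q(\Omega))$ with bounds $\lesssim r_0^{-1}K_\bullet^2$. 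The crucial observation is that each such bound carries a factor $\tau^\delta$ for some $\delta>0$, either directly from the norm redefinition $\|\cdot\|_{L^p(0,\tau;\cdot)}$ or via H\"older in time using the embedding into $L^{2p}$. The term $\theta_*\nabla_x\log\varrho=\theta_*\nabla_x\varrho/\varrho$ is controlled by $r_0^{-1}K_\varrho\|\theta_*\|_{L^\infty(0,\tau;L^\infty(\Omega))}$ combined with $B^{2(1-1/p)}_{qp}\hookrightarrow C(\overline\Omega)$ (from \eqref{embedding_Besov_cont}) and the same time-H\"older trick to produce a $\tau^\delta$-prefactor.

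Putting these together, each of the output norms satisfies an estimate of the form
\begin{equation*}
\|\bullet\|_{L^2_{p,q}(0,\tau)}\leq C_0(\text{data})+C_1(r_0,K_\varrho,K_\bu,K_\theta,K_\bB)\,\tau^\delta,
\end{equation*}
with $\delta>0$. Choosing $K_\bullet=2C_0(\text{data})$ and then $T_*$ small enough that $C_1\,T_*^\delta\leq C_0$ (and simultaneously small enough for all arguments above, and so that $T_*\leq T_{\mathrm{eff}}$ from \eqref{eq:T_eff_def}) yields the desired self-mapping property. The main obstacle is the coupling between the choice of $r_0$ and the nonlinear constants $C_1$: since $r_0$ only remains bounded away from $0$ provided $T_*$ is already small, one must verify that the $r_0$-dependence in $C_1$ (which enters as negative powers) does not prevent the $T_*^\delta$ factor from winning — this is resolved by first fixing $r_0$ from the initial data independently of $T_*$ via \eqref{ineq:max_min}, and only then selecting $T_*$.
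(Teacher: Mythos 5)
Your proposal follows essentially the same route as the paper: fix the $K_\bullet$ from the data via the linear estimates, fix $r_0$ from the Lagrangian min-max bound, and shrink $T_*$ so that the nonlinear couplings, which carry a $\tau^\delta$-prefactor thanks to the interpolation embeddings, are absorbed. The one place where your sketch is slightly imprecise is in how the $\tau^\delta$ prefactor arises for the quadratic terms $\SS(\bu_*)\!:\!\mathbb{D}\bu_*$ and $|\curl_x\bB_*|^2$: the embedding \eqref{eq:embedding_of_interest} into $L^{2p}(0,\tau;L^{2q})$ combined with Cauchy--Schwarz in time lands you exactly in $L^p$ with no spare exponent, so there is no room for a $\tau$-power; the paper has to upgrade the interpolation slightly to $L^{2p+\delta}$ (the modified embedding \eqref{embedding_delta_vareps}) to extract the factor $\tau^{\delta/(2p+\delta)}$. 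You allude to this ("a factor $\tau^\delta$ for some $\delta>0$"), but the cited embedding does not deliver it; the paper also needs Lemma~\ref{lemma:increasing_in_time_embedding} to guarantee the interpolation constants do not blow up as $\tau\to0$, which you do not address. These are small technical points and the overall strategy is the same, but the $L^{2p}$-vs-$L^{2p+\delta}$ distinction is load-bearing for the "shrink $T_*$" step and should be made explicit.
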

\begin{proof}
	Let $\tau \in [0,T]$, where $T>0$ is a (large) final time of existence which we will reduce in the course of the proof.
We proceed as follows for the proof of the self-mapping:
\begin{itemize}
\item We start by fixing $K_{\bu}>0$ and $K_\varrho>0$ to be larger than the initial data $\bu_0$ and $\varrho_0$ in the sense of the estimates of Lemmas~\ref{lemma::est_continuity}, \ref{lemma::est_der} and Proposition~\ref{prop::estimate_lin_velocity_field}. E.g., choose 
\begin{equation}\label{def_Ku}
	K_{\bu}^p \geq 2 \Lambda \Bigg[1 + \|G\|^p_{L^p(0,\tau;W^{1,q}(\Omega))}+  \|\bu_0\|_{B^{2(1-1/p)}_{qp}(\Omega)}^p + \| \bu_B\|^p_{F^{1-1/2q}_{pq}(0,\tau;L^q(\partial\Omega)) \cap L^p (0,\tau; W^{2-1/q,q}(\partial \Omega))}\Bigg],
\end{equation}
with $\Lambda$ as in Proposition~\ref{prop::estimate_lin_velocity_field}.

\item We then use the minimum principle \eqref{ineq:max_min} to verify that $\varrho \geq r_0$, eventually by decreasing $r_0$.

Having fixed $K_{\bu}>0$ and $K_\varrho>0$ and $r_0$, the smallest effective final time $T_\mathrm{eff}$ defined in \eqref{eq:T_eff_def} is strictly positive and will be our starting point to find the final time $T_* = \min\{T,T_\mathrm{eff}\}$, which we might need to decrease further.

\noindent Using the estimate of Lemma~\ref{lemma::est_der}, we have control over $|\varrho |_{L^\infty(0,T_*; W^{1,q}(\Omega))}$ in terms of $K_{\bu}$, $\varrho_0$, $\varrho_B$, and final time $T_*$. Recalling that $K_\varrho$ is fixed to be large compared to the initial and boundary data in the sense of estimate~\eqref{estimate_derivative_density}, we can decrease, if necessary, the final time $T_*>0$, so as to ensure that \[\|\varrho\|_{L^1_{\infty,q}(0,T_\mathrm{eff})} \leq K_\varrho.\]

\item Similarly, using the estimate of Proposition~\ref{prop::linearized_mag}, we find a suitable (large) $K_{\bB}$ such that $\|\bB\|_{L^2_{p,q}(0,T_\mathrm{eff})} \leq K_{\bB}$ ;
\item Next, we want to show that there exist $K_{\theta}$ such that we can ensure that $\|\theta\|_{L^2_{p,q}} \leq K_{\theta}$. 
To do so, we notice from estimate~\eqref{ineq::estimate::temp} that the norm of $\theta$ is controlled by the norm of the boundary and initial data. Furthermore, going back to \eqref{eq:embedding_of_interest}, we can modify the embedding such that there exist $1-1/p-d/2q>\varepsilon>0$, and $\delta(\varepsilon,p,q)>0$, such that the following embedding holds 
\begin{equation}\label{embedding_delta_vareps}
	C([0,T]; B^{1-2/p}_{qp}(\Omega)) \cap L^p(0,T; W^{1,q}(\Omega)) \hookrightarrow L^{2p+\delta}(0,T; B^{1-1/p-\varepsilon}_{qp}(\Omega)) \hookrightarrow L^{2p+\delta}(0,T; L^{2q}(\Omega)).
\end{equation}
Thus we can control the term
\begin{equation}
	\begin{aligned}
	&\left \|\frac{1}{\varrho} \SS(\bv):\mathbb{D}_x \bv \right \|_{L^p(0,\tau; L^q(\Omega))}^p \\ 
	& \qquad\leq \tau^{\dfrac{\delta}{(2p+\delta)}} \left\|\frac1\varrho \right\|_{L^\infty(0,\tau; L^\infty(\Omega))}^p \|\SS(\bv)\|_{L^{2p+\delta}(0,\tau;L^{2q}(\Omega))}^p \|\mathbb D_x \bv\|_{L^{2p+\delta}(0,\tau;L^{2q}(\Omega))}^p
	\end{aligned}
\end{equation}
in \eqref{ineq::estimate::temp}, making it arbitrarily smaller for sufficiently small final time (taking into account that the inequality constants can be made non-decreasing in time thanks to Lemma~\ref{lemma:increasing_in_time_embedding}). The same holds true for the term \[ \frac 1\varrho |\curl_x \bB|^2.\]
Thus there exists a combination $K_{\theta}$ and final time (still denoted) $T_*>0$ such that $\|\theta\|_{L^2_{p,q}(0,T_*)} \leq K_{\theta}$;
\item Similarly, for $\bu$, we exploit the regularity of $\theta$, $\bB$, and $\varrho$. In particular, we have 
	\begin{equation}
		\left\|\frac{1}{\varrho} \curl_x\bB \times \bB \right\|_{L^p(0,\tau; L^q(\Omega))}\lesssim \tau^{1/2p} \frac{1}{r_0} \|\curl_x \bB\|_{L^{2p}(0,\tau;L^q(\Omega))} \|\bB\|_{L^\infty(0,\tau;L^\infty(\Omega))},
	\end{equation}
where we made use of the embeddings~\eqref{reg_trace_v}, \eqref{embedding_Besov_cont}, and~\eqref{eq:embedding_of_interest}.
Additionally, we obtain that
\begin{equation}
	\left\|\nabla \theta \right\|_{L^p(0,\tau; L^q(\Omega))}\lesssim \tau^{1/2p}  \|\theta\|_{L^{2p}(0,\tau;L^q(\Omega))},
\end{equation}
and
\begin{equation}
	\left\| \theta \nabla_x \log(\varrho) \right\|_{L^p(0,\tau; L^q(\Omega))}\lesssim \tau^{1/p} \frac{1}{r_0} \|\theta\|_{L^{\infty}(0,\tau;L^\infty(\Omega))} \|\nabla_x\varrho\|_{L^\infty(0,\tau;L^q(\Omega))},
\end{equation}
thus, taking into account~\eqref{def_Ku}, there exists a final time (still denoted) $T_*>0$ such that $\|\bu\|_{L^2_{p,q}(0,T_*)} \leq K_{\bu}$.
\end{itemize}

\end{proof}

Next, we study the contraction properties of the mapping $\mathcal{F}$.

Let $(\varrho^*_1, \, \bu^*_1, \, \theta^*_1, \, \bB^*_1), \ (\varrho^*_2, \, \bu^*_2, \, \theta^*_2, \, \bB^*_2), \ (\varrho_1, \, \bu_1, \, \theta_1, \, \bB_1),\ (\varrho_2, \, \bu_2, \, \theta_2, \, \bB_2) \in \mathcal{B}$  such that:
\begin{equation}
	(\varrho^*_1, \, \bu^*_1, \, \theta^*_1, \, \bB^*_1) = \mathcal{F}(\varrho_1, \, \bu_1, \, \theta_1, \, \bB_1) , \qquad (\varrho^*_2, \, \bu^*_2, \, \theta^*_2, \, \bB_2^*) = \mathcal{F}(\varrho_2, \, \bu_2, \, \theta_2, \, \bB_2).
\end{equation}
We denote their difference by:
\begin{equation}
	\begin{aligned}
		(\overline \varrho^*, \, \overline \bu^*, \,\overline \theta^*, \, \overline \bB^*) &=  (\varrho_1^* - \varrho_2^*, \, \bu_1^* - \bu_2^*, \, \theta_1^* - \theta_2^*, \, \bB_1^* - \bB_2^*),\\
		(\overline \varrho, \, \overline \bu, \,\overline \theta, \, \overline \bB) &=  (\varrho_1 - \varrho_2, \, \bu_1 - \bu_2, \, \theta_1 - \theta_2, \, \bB_1 - \bB_2)
	\end{aligned}
\end{equation}
Then their difference solves the following system of equations:
\begin{equation}\label{sys::difference equation}
	\begin{aligned}
		&\pdt \overline \varrho^* + \div_x (\overline \varrho^* \bu_1 ) = - \div_x(\varrho_2^* \overline \bu)
		\\
		&
		\begin{multlined}
			\pdt \overline\bu^* + \bu_1 \cdot \nabla_x\overline\bu^* -  \frac1{\varrho_1} \div_x (\SS(\overline \bu^*)) =  \left( \frac1{\varrho_1} -\frac1{\varrho_2} \right)\div_x (\SS( \bu_2^*)) + \left(\frac1{\varrho_1} - \frac1{\varrho_2}\right) \curl_x\bB_1 \times \bB_1 \\+ \frac1{\varrho_2}\curl_x \bB_1 \times(\overline \bB) + \frac1{\varrho_2} \curl_x \overline \bB \times \bB_2-\nabla_x \overline\theta - \overline\theta \nabla_x \log(\varrho_1) - \theta_2 \nabla_x \log\left(\frac {\varrho_1 }{\varrho_2}\right) 
		\end{multlined}
		\\
		& \pdt \overline \bB^*  - \Delta_x \overline \bB^*= \curl_x (\bu_1 \times \overline \bB^*) + \curl_x (\overline \bu \times \bB_2^*)\\
		& 
		\begin{multlined}
			\pdt \overline \theta^* + \div_x(\overline\theta^* \bu_1) - \frac1{\varrho_1} \Delta_x \overline \theta^* = - \div_x(\theta^*_2 \overline\bu) + \left( \frac1{\varrho_1} -\frac1{\varrho_2} \right) \Delta_x\theta_2^*+ \left( \frac1{\varrho_1} -\frac1{\varrho_2} \right)  \SS(\bu_1):\mathbb{D}_x \bu_1 \\ + \frac{1}{\varrho_2} \SS(\overline \bu ): \mathbb{D}_x \bu_1 + \frac{1}{\varrho_2} \SS(\bu_2 ): \mathbb{D}_x \overline \bu + \left( \frac1{\varrho_1} -\frac1{\varrho_2} \right)  |\curl_x \bB_1|^2 \\  + \frac{1}{\varrho_2} \left(|\curl_x \bB_1|- |\curl_x \bB_2|\right) \left(|\curl_x \bB_1| + |\curl_x \bB_2|\right) 
		\end{multlined}
	\end{aligned}
\end{equation}
with zero initial and boundary conditions.
\begin{remark}
	The upcoming proof of contractivity will make use of the estimates of Propositions~\ref{prop::estimate_lin_velocity_field}, \ref{prop::estimate_lin_temp_field}, and \ref{prop::linearized_mag}. The difference terms in $\frac 1\varrho$ and $\log\varrho$ will be controlled using the lower bound on $\varrho \geq r_0>0$ on $[0,T]\times \overline \Omega$ so that
	\begin{equation}
		\frac1{\varrho_1} - \frac1{\varrho_2} \leq \frac {\overline{\varrho}}{r_0^2}
	\end{equation}
	and 
	\begin{equation}
		\log\frac{\varrho_1}{\varrho_2} \leq \frac {\overline{\varrho}}{r_0}.
	\end{equation}
On the other hand, the last term in the temperature equation will be handled using the inequality 
\begin{equation} 
	|\curl_x \bB_1|- |\curl_x \bB_2| \leq |\curl_x \overline \bB|.
\end{equation}
\end{remark}

The contractivity of the operator $\mathcal{F}: \mathcal{B} \mapsto \mathcal{B}$, will be given through several lemmas and will be shown in a lower topology compared to the one where we showed existence and well-posedness of solutions. 
The topology used to show contractivity is 
\begin{equation}\label{def:norm_low_top}
	\|\varrho, \bu ,\theta,\bB\|_{\mathcal{B}\mathrm{,low}} = \|\varrho\|_{L^\infty(0,T;L^2(\Omega))}  + \|\bu\|_{L^2(0,T;H^1(\Omega))}  +\|\theta\|_{L^2(0,T;H^1(\Omega))} +\|\bB\|_{L^2(0,T;H^1(\Omega))}.
\end{equation}

Note that it is not unusual in the case of hyperbolic and hyperbolic-parabolic equations which is due to the known problem of loss of regularity for such problems, which makes it impossible to obtain contractivity in the space of solutions. The idea of Kato and Lax as reported in \cite[p. 38]{majda2012compressible} to avoid this difficulty is that it is sufficient to show contractivity in a bigger space (lower topology). We refer to the works of, e.g., \cite{kaltenbacher2022parabolic,kotschote2012strong_allen-cahn,kotschote2012strong} for similar instances of using a lower-topology setting to prove contractivity of the mapping.

First, we consider, similarly to \cite{kotschote2012strong}, the contraction inequality for the continuity equation.

\begin{lemma}\label{lemma:contraction_dens}
Let the assumptions of Proposition~\ref{prop::self_mapping} be satisfied. Let $\tau \in [0,T_*]$, where $T_*$ is as in Proposition~\ref{prop::self_mapping}. Then
	\begin{equation}
		\|\varrho_1^*-\varrho_2^*\|_{L^\infty(0,\tau;L^2(\Omega))} \leq C_* \tau^{1-1/p} \|\bu_1-\bu_2\|_{L^p(0,\tau;H^1(\Omega))},
	\end{equation}
where $C_*$ depends on the ball $\mathcal{B}$ and is increasing with time $T$.
\end{lemma}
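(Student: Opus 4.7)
The plan is to apply Lemma~\ref{lemma::est_continuity} (with the lemma's exponent set to $p=2$) to the first equation of system~\eqref{sys::difference equation}, which governs $\overline{\varrho}^* = \varrho_1^* - \varrho_2^*$. Rewriting it in transport form,
\[
\pdt \overline{\varrho}^* + \bu_1 \cdot \nabla_x \overline{\varrho}^* + \overline{\varrho}^* \div_x \bu_1 = -\div_x(\varrho_2^* \overline{\bu}),
\]
one sees that $\overline{\varrho}^*$ fits the framework of \eqref{eq:rho} with drift $\bv = \bu_1$, vanishing initial datum (since $\varrho_1^*(0) = \varrho_2^*(0) = \varrho_0$), and vanishing trace on $\Gamma_{\textup{in}}$ (since $\varrho_1^* = \varrho_2^* = \varrho_B$ there). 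The source $f := -\div_x(\varrho_2^* \overline{\bu})$ does not depend on the unknown, so \eqref{ineq:f_Lp} holds trivially with $k_1 \equiv 0$ and $k_2(t) := \|f(t,\cdot)\|_{L^2(\Omega)}$.

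Next, I would estimate $k_2$ by expanding the divergence into
\[
\|\div_x(\varrho_2^* \overline{\bu})\|_{L^2(\Omega)} \leq \|\varrho_2^*\|_{L^\infty(\Omega)} \|\div_x \overline{\bu}\|_{L^2(\Omega)} + \|\nabla_x \varrho_2^*\|_{L^q(\Omega)} \|\overline{\bu}\|_{L^{\frac{2q}{q-2}}(\Omega)},
\]
and invoking the embeddings $W^{1,q}(\Omega) \hookrightarrow L^\infty(\Omega)$ and $H^1(\Omega) \hookrightarrow L^{\frac{2q}{q-2}}(\Omega)$, both valid for $q>d=3$ (since $\tfrac{2q}{q-2} < 6$). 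Combined with the $\mathcal{B}$-bound $\|\varrho_2^*\|_{L^\infty(0,\tau;W^{1,q}(\Omega))} \leq K_\varrho$, this yields pointwise in time
\[
\|f(t)\|_{L^2(\Omega)} \lesssim K_\varrho \, \|\overline{\bu}(t)\|_{H^1(\Omega)}.
\]

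Finally, I would plug this into Lemma~\ref{lemma::est_continuity} specialized to $p=2$: since both the initial and boundary contributions vanish,
\[
\|\overline{\varrho}^*\|_{L^\infty(0,\tau;L^2(\Omega))} \leq \|k_2\|_{L^1(0,\tau)} \exp\!\left(\int_0^\tau \|\div_x \bu_1\|_{L^\infty(\Omega)} \dt\right).
\]
A H\"older inequality in time with exponents $p$ and $p/(p-1)$ produces $\|k_2\|_{L^1(0,\tau)} \leq \tau^{1-1/p} C(K_\varrho) \|\overline{\bu}\|_{L^p(0,\tau;H^1(\Omega))}$, while the exponential prefactor is uniformly controlled on $[0,T]$ by means of $W^{2,q}(\Omega) \hookrightarrow W^{1,\infty}(\Omega)$ and the $\mathcal{B}$-bound $\|\bu_1\|_{L^2_{p,q}(0,T)} \leq K_{\bu}$ (after one more H\"older step in time); it is manifestly monotone in $T$. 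Assembling these pieces delivers the claimed estimate with $C_*$ depending on $\mathcal{B}$ via $K_\varrho$, $K_{\bu}$, and the embedding constants, and non-decreasing in $T$.

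The argument is mechanical; the only subtlety worth flagging is aligning the integrability exponent $\tfrac{2q}{q-2}$ produced by H\"older against the Sobolev threshold for $H^1$, which is precisely what the standing assumption $q>d$ secures.
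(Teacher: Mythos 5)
Your proof follows the same strategy as the paper: identify the transport equation satisfied by $\overline{\varrho}^*$ with vanishing initial and inflow-boundary data, estimate the source $-\div_x(\varrho_2^*\overline{\bu})$ in $L^2(\Omega)$, and apply Lemma~\ref{lemma::est_continuity}. Your product-rule estimate $\|\div_x(\varrho_2^*\overline{\bu})\|_{L^2}\lesssim \|\varrho_2^*\|_{W^{1,q}}\|\overline{\bu}\|_{H^1}$ via the H\"older/Sobolev pair $q$ and $\tfrac{2q}{q-2}\le 6$ is a slightly more careful rendering of the paper's terser bound, and the conclusion via H\"older in time and the exponential prefactor is handled the same way.
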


\begin{proof}
	We set $\overline \varrho^* = \varrho_1^*-\varrho_2^*$ and set $\overline \bu = \bu_1 -\bu_2$, then $\overline \varrho$ solves
	\begin{equation}\label{eq:rho_contractivity}
		\begin{aligned}
			\pdt \overline \varrho^* + \div_x (\overline \varrho^* \bu_1 ) &= - \div_x(\varrho_2^* \overline \bu) &&\qquad \textrm{on } [0,T] \times \Omega,\\
			\varrho(0,\cdot) &= 0  &&\qquad \textrm{on } \Omega,\\
			\varrho(\cdot,x) &= 0 &&\qquad \textrm{on } \Gamma_{\textup{in}}.
		\end{aligned} 	   
	\end{equation}
We note that we can estimate:
\begin{equation}
	\|\div_x(\varrho_2^* \overline \bu)\|_{L^1(0,T;L^2(\Omega))} \lesssim \|\varrho_2^*\|_{L^\infty(0,T;H^1(\Omega))} \|\overline \bu\|_{L^1(0,T;H^1(\Omega))},
\end{equation}
where we take advantage of the regularity of $\varrho_2$ as ensured by Lemma~\ref{lemma::est_der}.
We can then apply Lemma~\ref{lemma::est_continuity} to \eqref{eq:rho_contractivity} to obtain the estimate:
	\begin{equation}\label{ineq:Lp_rho_contractivity_proof}
	\begin{aligned}
		\|&\overline \varrho^*\|_{L^\infty(0,T;L^2(\Omega))} \leq C_* \tau^{1-1/p}  \|\overline \bu\|_{L^p(0,T;H^1(\Omega))}.
	\end{aligned}
\end{equation}
\end{proof}

We next discuss the contraction inequality of the velocity field. 
\begin{lemma} \label{lemma:contraction_vel}
	Let the assumptions of Proposition~\ref{prop::self_mapping} be satisfied. Let $\tau \in [0,T_*]$, where $T_*$ is as in Proposition~\ref{prop::self_mapping}. Then
 \begin{equation}
 	\begin{aligned}
 	\|\overline\bu^*\|_{L^\infty(0,T;L^2(\Omega))}^2 +  \int_0^\tau \|\overline \bu^*\|^2_{H^1(\Omega)} \dt \lesssim & \tau^{1-1/p} \|\overline \varrho\|_{L^{\infty}(0,\tau; L^2(\Omega))} 
  \\ &	+ \left( \tau^{1/2(1-1/p)}+ \tau^{1/2} \right)\|\overline \bB^* \|_{L^2(0,\tau;H^1(\Omega))} + \tau^{1/2} \|\overline \theta^* \|_{L^2(0,\tau;H^1(\Omega))},
 	\end{aligned}
 \end{equation} 
where the hidden constant depends on the ball $\mathcal{B}$ and is increasing with time $T$.
\end{lemma}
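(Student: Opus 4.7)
The plan is to perform an energy estimate on the equation for $\overline\bu^*$ in the second block of \eqref{sys::difference equation}, testing against $\overline\bu^*$ itself. The time-derivative pairing gives $\tfrac12\ddt\|\overline\bu^*\|_{L^2(\Omega)}^2$. The transport term $\int_\Omega(\bu_1\cdot\nabla_x\overline\bu^*)\cdot\overline\bu^*\dx$ reduces, after integration by parts using the homogeneous boundary condition $\overline\bu^*|_{\partial\Omega}=0$, to $-\tfrac12\int_\Omega\div_x\bu_1\,|\overline\bu^*|^2\dx$; this is controlled by $\|\div_x\bu_1\|_{L^\infty(\Omega)}\|\overline\bu^*\|_{L^2(\Omega)}^2$ thanks to the embedding $W^{2,q}(\Omega)\hookrightarrow W^{1,\infty}(\Omega)$ valid for $q>d$. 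The viscous term $-\int_\Omega\tfrac{1}{\varrho_1}\div_x\SS(\overline\bu^*)\cdot\overline\bu^*\dx$, integrated by parts, yields a coercive contribution $\gtrsim r_0^{-1}\|\nabla_x\overline\bu^*\|_{L^2(\Omega)}^2$ via Korn's inequality, together with a lower-order remainder of the form $\int_\Omega\nabla_x(1/\varrho_1)\cdot\SS(\overline\bu^*)\cdot\overline\bu^*\dx$, which is absorbed by Young's inequality using the uniform $L^\infty_tL^q_x$ bound on $\nabla_x(1/\varrho_1)$ inherited from $\varrho_1\in L^1_{\infty,q}(0,\tau)$ and the lower bound $\varrho_1\geq r_0>0$.

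For the right-hand side of the momentum difference equation, I pair each term with $\overline\bu^*$ in the natural duality and estimate using Hölder's inequality in space and time, the Sobolev embedding $W^{1,q}\hookrightarrow L^\infty$ for $q>d$, and the uniform $\mathcal B$-bounds on $\varrho_j,\bu_j^*,\theta_j,\bB_j$. The density-driven pieces are handled via the pointwise inequalities $|1/\varrho_1-1/\varrho_2|\lesssim r_0^{-2}|\overline\varrho|$ and $|\log(\varrho_1/\varrho_2)|\lesssim r_0^{-1}|\overline\varrho|$ from the remark preceding the lemma; the companion factors $\div_x\SS(\bu_2^*)$, $\curl_x\bB_1\times\bB_1$, and $\theta_2\nabla_x\log(\varrho_1/\varrho_2)$ are bounded in $L^p_tL^q_x$ by membership in $\mathcal B$, and Hölder's inequality in time with conjugate exponents $(p,p')$ produces the prefactor $\tau^{1/p'}=\tau^{1-1/p}$ in front of $\|\overline\varrho\|_{L^\infty(0,\tau;L^2(\Omega))}$. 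The magnetic contributions $\tfrac{1}{\varrho_2}\curl_x\bB_1\times\overline\bB^*$ and $\tfrac{1}{\varrho_2}\curl_x\overline\bB^*\times\bB_2$ are controlled by placing $\bB_1,\bB_2$ in $L^\infty_tL^\infty_x$ through \eqref{reg_trace_v}--\eqref{embedding_Besov_cont} together with \eqref{eq:embedding_of_interest}; pairing $\curl_x\overline\bB^*\in L^2_tL^2_x$ with $\overline\bu^*\in L^2_tL^2_x$ extracts the $\tau^{1/2}$ factor, while the alternative split via Hölder with exponents $(2p,2p/(2p-1))$ followed by the time-embedding $L^2_t\hookrightarrow L^{2p/(2p-1)}_t$ yields the $\tau^{(1-1/p)/2}=\tau^{1/2(1-1/p)}$ factor. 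Finally, the pressure-gradient-type term $\nabla_x\overline\theta^*+\overline\theta^*\nabla_x\log(\varrho_1)$ is paired with $\overline\bu^*$ in $L^2_tL^2_x$, producing the prefactor $\tau^{1/2}$ multiplying $\|\overline\theta^*\|_{L^2(0,\tau;H^1(\Omega))}$.

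Gathering all contributions, integrating the pointwise-in-time identity over $[0,\tau]$, absorbing the coercive $r_0^{-1}\|\nabla_x\overline\bu^*\|_{L^2(0,\tau;L^2(\Omega))}^2$ and the lower-order $\|\overline\bu^*\|_{L^2(\Omega)}^2$ remainders on the left using smallness of $\tau\leq T_*$, and applying Gronwall's inequality with the $L^1_t$-integrable coefficient $\|\div_x\bu_1(t)\|_{L^\infty(\Omega)}\in L^p_t\subset L^1_t$ give the announced estimate after taking the supremum over $\tau$ of the $L^2_x$-norm. The main technical obstacle is the careful bookkeeping of time exponents, in particular producing the mixed factor $\tau^{1/2(1-1/p)}+\tau^{1/2}$ for the magnetic contribution, which reflects two distinct Hölder splittings of the time integral matching, respectively, the natural $L^p_t$-integrability of $\curl_x\bB_j$ and the $L^2_t$-integrability available for $\overline\bB^*$ in the lower topology~\eqref{def:norm_low_top}.
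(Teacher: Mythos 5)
Your proposal is correct and follows essentially the same route as the paper: an $L^2$-energy estimate on the velocity difference equation, coercivity via Korn--Poincar\'e, pointwise bounds on $1/\varrho_1-1/\varrho_2$ and $\log(\varrho_1/\varrho_2)$, the ball $\mathcal{B}$-bounds plus the interpolation/embedding $\curl_x\bB\in L^{2p}_tL^q_x$ to extract the time exponents, and a final Gronwall step. The only noticeable (and welcome) refinement is that you explicitly track the lower-order remainder $\int_\Omega \nabla_x(1/\varrho_1)\cdot\SS(\overline\bu^*)\,\overline\bu^*\dx$ arising from integrating the viscous term by parts against the nonconstant coefficient $1/\varrho_1$, and absorb it by Young's inequality; the paper's displayed identity \eqref{eq:first_eq_temp_contraction} suppresses this term, and it also pairs $\tilde f$ with $\overline\bu^*$ as $L^1_tL^1_x$--$L^\infty_tL^\infty_x$ (using the $\mathcal{B}$-bound on $\overline\bu^*$), whereas you pair each piece in $L^2$-duality directly, reaching the same conclusion.
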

\begin{proof}
	We test the difference equation for the velocity field by $\overline \bu^*$ and integrate by parts in $\Omega$ exploiting the fact that $\overline \bu^*|_{\partial\Omega}=0$:
	\begin{equation} \label{eq:first_eq_temp_contraction}
		\begin{multlined}
		\frac12\pdt \|\overline\bu^*\|_{L^2(\Omega)}^2 + \int_\Omega \bu_1 \cdot \nabla_x\overline\bu^* \overline \bu^* \dx +   \int_\Omega \frac1{\varrho_1} \SS(\overline \bu^*) : \mathbb{D}_x \overline \bu^* \dx = \int_{\Omega} \tilde f \overline \bu^*\dx,
		\end{multlined}
	\end{equation}
	where 
	\begin{equation}
		\begin{aligned}
		\tilde f = &\left( \frac1{\varrho_1} -\frac1{\varrho_2} \right)\div_x (\SS( \bu_2^*)) + \left(\frac1{\varrho_1} - \frac1{\varrho_2}\right) \curl_x\bB_1 \times \bB_1 + \frac1{\varrho_2}\curl_x \bB_1 \times(\overline \bB) + \frac1{\varrho_2} \curl_x \overline \bB \times \bB_2\\ &-\nabla_x \overline\theta - \overline\theta \nabla_x \log(\varrho_1) - \theta_2 \nabla_x \log\left(\frac {\varrho_1 }{\varrho_2}\right) .
		\end{aligned}
	\end{equation}

The second term on the left-hand side of \eqref{eq:first_eq_temp_contraction} can be handled by noticing that for arbitrary $\varepsilon>0$
\begin{equation}
	\int_\Omega \bu_1 \cdot \nabla_x\overline\bu^* \overline \bu^* \dx \leq \varepsilon \|\nabla_x\overline\bu^*\|_{L^2(\Omega)}^2 + C(\varepsilon) \|\bu_1\|_{L^\infty(\Omega)} \|\overline{\bu}^*\|_{L^2(\Omega)}.
\end{equation}
We intend to absorb the $\varepsilon$-term above by the term:
 \begin{equation}
 	\int_\Omega \frac1{\varrho_1} \SS(\overline \bu^*) : \mathbb{D}_x \overline \bu^* \dx \gtrsim \frac 1{K_{\varrho}} \|\overline \bu^*\|^2_{H^1(\Omega)}, 
 \end{equation}
owing to Korn--Poincar\'e's inequality (see, e.g., \cite[Section 11.10]{feireisl2009singular}) and the fact that $(\varrho_1, \cdot,\cdot,\cdot ) \in \mathcal{B}$.
We then integrate over $(0,\tau)$, using Gronwall's inequality to deduce
\begin{equation}
	\begin{aligned}
	\|\overline\bu^*\|_{L^2(\Omega)}^2 +  \int_0^\tau \|\overline \bu^*\|^2_{H^1(\Omega)} \lesssim  &\exp( c   \|\bu_1\|_{L^1(0,\tau;L^\infty(\Omega)})) \|\overline \bu^*\|_{L^\infty(0,\tau;L^\infty(\Omega))} \|\tilde f\|_{L^1(0,\tau;L^1(\Omega))}.
	\end{aligned}
\end{equation}

Next, we note that
\begin{equation}
\begin{aligned}
	\|\tilde f\|_{L^1(0,\tau;L^1)} \lesssim & \tau^{1-1/p} \left(\frac{K_{\bu} + K_{\bB}^2 + K_\theta r_0}{r_0^2} \right)\|\overline \varrho\|_{L^{\infty}(0,\tau; L^2(\Omega))} \\
	& + \frac{K_{\bB}}{r_0} \left( \tau^{1/2(1-1/p)}+ \tau^{1/2} \right)\|\overline \bB^* \|_{L^2(0,\tau;H^1(\Omega))} + \left(1+\frac{K_\varrho}{r_0}\right)\tau^{1/2} \|\overline \theta^* \|_{L^2(0,\tau;H^1(\Omega))},
\end{aligned}
\end{equation}
where $q'$ and $p'$ are the conjugate H\"older exponents of $q$ and $p$ respectively. In particular $q>d>2>q'$.
We have also used the fact that  
\[\curl_x \bB \in L^{2p }(0,\tau;L^q(\Omega)),\]
 which can be be shown using real interpolation of the spaces $L^\infty(0,\tau; B^{1-2/p}_{qp}(\Omega))$ and $L^p(0,\tau; W^{1,q}(\Omega))$ (to which $\curl_x\bB$ belongs) with parameters $1/2$ and $p$.

Putting the estimates together yields the desired estimate.
\end{proof}

The contraction inequalities for the temperature and magnetic fields are obtained analogously. 
\begin{lemma}\label{lemma:contraction_temp}
Let the assumptions of Proposition~\ref{prop::self_mapping} be satisfied. Let $\tau \in [0,T_*]$, where $T_*$ is as in Proposition~\ref{prop::self_mapping}. There exists $\alpha>0$ such that
	\begin{equation}
		\begin{aligned}
			\|\overline\theta^*\|_{L^\infty(0,\tau ;L^2(\Omega))}^2 +   \|\overline \theta^*\|^2_{L^2(0,\tau;H^1(\Omega))} \lesssim & \left( \tau^{1/2(1-1/p)}+ \tau^{1/2} \right)\|\overline \bu^* \|_{L^2(0,\tau;H^1(\Omega))}  
			\\ &	+ ( \tau^{1-1/p} + \tau^\alpha) \|\overline \varrho\|_{L^{\infty}(0,\tau; L^2(\Omega))} \\ &+ \left( \tau^{1/2(1-1/p)} \right)\|\overline \bB^* \|_{L^2(0,\tau;H^1(\Omega))},
		\end{aligned}
	\end{equation} 
where the hidden constant depends on the ball $\mathcal{B}$ and is increasing with time $T$.
\end{lemma}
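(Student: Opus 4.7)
The plan is to mimic the proof of Lemma~\ref{lemma:contraction_vel}: I test the temperature-difference equation (the fourth line of \eqref{sys::difference equation}) against $\overline\theta^*$ in $L^2(\Omega)$ and integrate by parts using $\overline\theta^*|_{\partial\Omega}=0$. This yields an identity of the schematic form
\begin{equation*}
\tfrac12\ddt\|\overline\theta^*\|_{L^2(\Omega)}^2+\int_\Omega\tfrac{1}{\varrho_1}|\nabla_x\overline\theta^*|^2\dx=\int_\Omega\tilde g\,\overline\theta^*\dx+\mathcal{R},
\end{equation*}
where $\tilde g$ collects the source terms of the temperature-difference equation and $\mathcal{R}$ gathers the lower-order corrections produced by integrating the convective term $\int\div_x(\overline\theta^*\bu_1)\overline\theta^*\dx$ and the commutator $\int\nabla_x(1/\varrho_1)\cdot\nabla_x\overline\theta^*\,\overline\theta^*\dx$ by parts. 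These corrections are controlled by H\"older and Young's inequality using $\bu_1\in L^p(W^{2,q})\hookrightarrow L^p(W^{1,\infty})$ (valid for $q>d$) and $\nabla_x(1/\varrho_1)\in L^\infty(L^q)$, absorbing a fraction of the gradient norm into the left-hand side. The upper bound $\varrho_1\leq CK_\varrho$ (via $W^{1,q}\hookrightarrow L^\infty$), combined with Poincar\'e's inequality (valid since $\overline\theta^*$ has zero boundary trace), provides the dissipative bound $\tfrac{1}{CK_\varrho}\|\overline\theta^*\|_{H^1(\Omega)}^2$ needed to close the left-hand side.

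Next I would bound each source term in $\tilde g$. The transport contribution $-\div_x(\theta_2^*\overline\bu)$ is integrated by parts onto $\nabla_x\overline\theta^*$; bounding $\theta_2^*$ in $L^\infty$ via \eqref{embedding_Besov_cont} and closing with $\varepsilon$-Young produces the $\tau^{1/2}\|\overline\bu\|_{L^2(H^1)}$ contribution. The density-difference terms $(1/\varrho_1-1/\varrho_2)\Delta_x\theta_2^*$, $(1/\varrho_1-1/\varrho_2)\SS(\bu_1):\mathbb{D}_x\bu_1$ and $(1/\varrho_1-1/\varrho_2)|\curl_x\bB_1|^2$ are controlled via the pointwise bound $|1/\varrho_1-1/\varrho_2|\leq r_0^{-2}|\overline\varrho|$, together with $\overline\theta^*\in L^2(H^1)\hookrightarrow L^2(L^6)$ (in dimension $d=3$) and the $L^p(L^q)$ regularity of $\Delta_x\theta_2^*$, $\SS(\bu_1):\mathbb{D}_x\bu_1$ and $|\curl_x\bB_1|^2$ inherited from membership in $\mathcal{B}$; a three-way H\"older split then yields the $\tau^{1-1/p}+\tau^\alpha$ prefactor on $\|\overline\varrho\|_{L^\infty(L^2)}$, with $\alpha=1-2/p>0$ thanks to $p>2q/(q-1)>2$. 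The velocity-difference terms $\frac{1}{\varrho_2}\SS(\overline\bu):\mathbb{D}_x\bu_1$ and $\frac{1}{\varrho_2}\SS(\bu_2):\mathbb{D}_x\overline\bu$ are treated using the improved integrability $\mathbb{D}_x\bu_j\in L^{2p+\delta}(L^{2q})$ coming from \eqref{embedding_delta_vareps} and H\"older in time, reproducing the $(\tau^{1/2(1-1/p)}+\tau^{1/2})$ factor on $\|\overline\bu\|_{L^2(H^1)}$. Finally, the magnetic-curl-difference is rewritten via $||\curl_x\bB_1|-|\curl_x\bB_2||\leq|\curl_x\overline\bB|$ and paired with $\curl_x\bB_j\in L^{2p}(L^q)$ (via the real-interpolation argument used at the end of Lemma~\ref{lemma:contraction_vel}), producing the $\tau^{1/2(1-1/p)}\|\overline\bB\|_{L^2(H^1)}$ contribution.

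After collecting all contributions I would integrate over $(0,\tau)$ and close via Gronwall's inequality, exactly as at the end of the proof of Lemma~\ref{lemma:contraction_vel}. The hard part will be the density-difference term involving $\Delta_x\theta_2^*$: since this factor has only $L^p(L^q)$ regularity (with $p$ possibly smaller than $q$), producing a strictly positive exponent $\alpha>0$ of $\tau$ requires a delicate three-way H\"older split in both space and time, pairing $\overline\varrho\in L^\infty(L^2)$, $\Delta_x\theta_2^*\in L^p(L^q)$ and $\overline\theta^*\in L^2(L^6)$, and crucially using $p>2q/(q-1)$ to guarantee a slack exponent in time. A secondary, more bookkeeping-level concern is ensuring that all the embedding constants arising along the way can be taken non-decreasing in $T$, which is guaranteed by Lemma~\ref{lemma:increasing_in_time_embedding}.
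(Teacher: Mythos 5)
Your proof follows the same energy-estimate template that the paper invokes for this lemma (testing the temperature-difference equation against $\overline\theta^*$, integrating by parts, and closing the source terms with H\"older and Young, as in Lemma~\ref{lemma:contraction_vel}), so in structure you and the paper agree. The one place you diverge is the provenance of the exponent $\alpha$: the paper's three-line proof attributes $\alpha$ to the auxiliary parameter $\delta$ in the improved embedding~\eqref{embedding_delta_vareps}, whereas you pin $\alpha$ to the $(1/\varrho_1-1/\varrho_2)\Delta_x\theta_2^*$ source, for which only $\Delta_x\theta_2^*\in L^p(0,\tau;L^q(\Omega))$ is available, and extract positive slack in time from the three-way split $L^\infty(L^2)\times L^p(L^q)\times L^{p'}(L^6)$ together with $p>2$. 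This is in fact the binding constraint among the three density-difference sources, since $\SS(\bu_1):\mathbb{D}_x\bu_1$ and $|\curl_x\bB_1|^2$ enjoy the strictly better time-integrability $L^{p+\delta/2}(L^q)$ coming from~\eqref{embedding_delta_vareps}, so your identification of the worst term is arguably sharper than the paper's cryptic remark. A cosmetic caveat: the value $\alpha=1-2/p$ you quote is the exponent that appears after Young's inequality on the squared (dissipation) side; the form linear in $\|\overline\varrho\|_{L^\infty(L^2)}$, as the lemma is actually written, corresponds to $\alpha=1/2-1/p$. This homogeneity mismatch (squared left-hand side versus unsquared right-hand side) is already present in the lemma as stated in the paper and is not something your argument introduces.
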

\begin{proof}
	The proof follows along similar lines to those of Lemma~\ref{lemma:contraction_vel}. We note that we use the embedding ~\eqref{embedding_delta_vareps} and that $\alpha>0$ depends on $\delta$ in \eqref{embedding_delta_vareps}. We omit the rest of the details.
\end{proof}

\begin{lemma}\label{lemma:contraction_mag}
	Let the assumptions of Proposition~\ref{prop::self_mapping} be satisfied. Let $\tau \in [0,T_*]$, where $T_*$ is as in Proposition~\ref{prop::self_mapping}. Then
	\begin{equation}
		\begin{aligned}
			\|\overline\bB^*\|_{L^\infty(0,T;L^2(\Omega))}^2 +  \|\overline \bB^*\|^2_{L^2(0,\tau;H^1(\Omega))} \lesssim  \left( \tau^{1/2(1-1/p)}+ \tau^{1/2} \right)\|\overline \bu^* \|_{L^2(0,\tau;H^1(\Omega))},
		\end{aligned}
	\end{equation} 
where the hidden constant depends on the ball $\mathcal{B}$ and is increasing with time $T$.
\end{lemma}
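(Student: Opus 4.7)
I would follow the strategy of Lemma~\ref{lemma:contraction_vel} and carry out an $L^2$-based energy estimate on the difference equation for $\overline \bB^* = \bB_1^* - \bB_2^*$ as given in~\eqref{sys::difference equation}, namely
\begin{equation}
\pdt \overline \bB^* - \Delta_x \overline \bB^* = \curl_x (\bu_1 \times \overline \bB^*) + \curl_x (\overline \bu \times \bB_2^*),
\end{equation}
subject to the homogeneous initial condition $\overline \bB^*(0)=0$ and the homogeneous tangential boundary condition $\overline \bB^* \times n = 0$ on $\partial\Omega$; the constraint $\div_x \overline \bB^* = 0$ is inherited from $\div_x \bB_{1,0} = \div_x \bB_{2,0} = 0$ by linearity, cf.~\eqref{eq::reformulated_magnetics}.

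Testing with $\overline \bB^*$, I would handle the diffusion via the identity $-\Delta_x = \curl_x\curl_x - \nabla_x\div_x$ and an integration by parts: since $\div_x \overline \bB^* = 0$ and $\overline \bB^* \times n = 0$, the surviving boundary contributions vanish and one obtains the coercive term $\|\curl_x \overline \bB^*\|_{L^2(\Omega)}^2$ which, together with the divergence-free condition and the Kozono--Yanagisawa estimate invoked just before Proposition~\ref{prop::linearized_mag}, controls $\|\overline \bB^*\|_{H^1(\Omega)}^2$. The two curl-type source terms on the right can then be moved to an inner product with $\curl_x \overline \bB^*$ by another integration by parts, and Young's inequality would absorb one gradient back into the coercive term, leaving two remainders, $\|\bu_1\|_{L^\infty(\Omega)}^2 \|\overline \bB^*\|_{L^2(\Omega)}^2$ and $\|\bB_2^*\|_{L^\infty(\Omega)}^2\|\overline \bu\|_{L^2(\Omega)}^2$.

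Next, I would integrate on $(0,\tau)$ and apply Gronwall's inequality. The exponential factor stays finite because $\bu_1 \in L^2_{p,q}(0,T) \hookrightarrow L^2(0,T; L^\infty(\Omega))$ by $q>d$ combined with~\eqref{embedding_Besov_cont}, with norm controlled by $K_{\bu}$ uniformly on $\mathcal B$; the resulting constant is non-decreasing in $\tau$ by Lemma~\ref{lemma:increasing_in_time_embedding}. This yields
\begin{equation}
\|\overline \bB^*\|_{L^\infty(0,\tau;L^2)}^2 + \|\overline \bB^*\|_{L^2(0,\tau;H^1)}^2 \lesssim \int_0^\tau \|\bB_2^*\|_{L^\infty(\Omega)}^2 \|\overline \bu\|_{L^2(\Omega)}^2 \dt.
\end{equation}

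To conclude, I would extract the small-$\tau$ factor via H\"older's inequality in time, using the two interpolation embeddings of $\bB_2^*$ that follow from~\eqref{reg_trace_v}--\eqref{embedding_Besov_cont} and~\eqref{eq:embedding_of_interest}: the $L^\infty(0,\tau;L^\infty(\Omega))$ bound (uniform on $\mathcal B$) together with the boundedness of $\|\overline \bu\|_{L^2(H^1)}$ on $\mathcal B$ produces the factor $\tau^{1/2}$, while pairing instead with the $L^{2p}(0,\tau;L^{2q}(\Omega))$ bound on $\bB_2^*$ and interpolating in time against $\overline \bu$ accounts for the factor $\tau^{1/2(1-1/p)}$. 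The main subtlety is the careful bookkeeping of the boundary integrals arising in the two integrations by parts for the $\curl_x$ terms on a non-flat $\partial\Omega$; leveraging the Kozono--Yanagisawa coercivity already cited for the linear problem~\eqref{eq::reformulated_magnetics} is what lets one bypass this computation and directly assert the $H^1$-coercivity needed above.
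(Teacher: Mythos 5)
The paper omits this proof entirely, stating only that it follows along the lines of Lemma~\ref{lemma:contraction_vel}; your proposal supplies the expected details and follows the same strategy, so it is the "correct route." The core of the argument — testing the difference equation by $\overline \bB^*$, using $-\Delta_x = \curl_x\curl_x - \nabla_x\div_x$ with $\div_x\overline\bB^* = 0$ (inherited since $\overline\bB^*(0)=0$) and $\overline\bB^*\times n = 0$ to kill the boundary terms and obtain the coercive quantity $\|\curl_x\overline\bB^*\|_{L^2(\Omega)}^2$, applying the Kozono--Yanagisawa div-curl estimate to upgrade this to $\|\overline\bB^*\|_{H^1(\Omega)}^2$, handling the $\curl_x(\bu_1\times\overline\bB^*)$ contribution with Gronwall, and H\"older-izing the $\curl_x(\overline\bu\times\bB_2^*)$ contribution in time with $\bB_2^*\in L^\infty\cap L^{2p}$ bounds — is exactly what is needed. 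Note also that you correctly use $\overline\bu=\bu_1-\bu_2$ on the right-hand side, consistent with the difference system~\eqref{sys::difference equation}; the appearance of $\overline\bu^*$ in the lemma statement is a typographical slip (the same slip occurs in Lemmas~\ref{lemma:contraction_vel} and~\ref{lemma:contraction_temp}).

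Two minor clarifications are in order. First, the Kozono--Yanagisawa inequality does not "bypass" the boundary bookkeeping: you must still verify that the boundary integrals in the two curl integrations-by-parts vanish (which they do, because $\overline\bB^*\times n = 0$ forces $\overline\bB^*$ to be normal, so the cross products appearing in those boundary integrals are tangential and orthogonal to $\overline\bB^*$), and Kozono--Yanagisawa is then a \emph{separate} elliptic estimate that converts $\|\curl_x\overline\bB^*\|_{L^2}+\|\div_x\overline\bB^*\|_{L^2}+\|\overline\bB^*\|_{L^2}$ into $\|\overline\bB^*\|_{H^1}$. Second, once you have applied Young's inequality to land on $\int_0^\tau\|\bB_2^*\|_{L^\infty}^2\|\overline\bu\|_{L^2}^2\dt$, the passage to the two factors $\tau^{1/2}$ and $\tau^{1/2(1-1/p)}$ multiplying $\|\overline\bu\|_{L^2(0,\tau;H^1)}$ to the first power requires a bit more care than you indicate: one factor of $\|\overline\bu\|$ must be bounded by a ball constant (absorbed into the hidden constant, exactly as the paper does in Lemma~\ref{lemma:contraction_vel} with $\|\overline\bu^*\|_{L^\infty(0,\tau;L^\infty)}$), and the small-$\tau$ powers then come from H\"older in time applied to the remaining $\|\overline\bu\|_{L^2(0,\tau;L^2)}$-type factor and to the $\bB_2^*$-norm in its two available function spaces. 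This is a matter of bookkeeping, not a genuine gap, but your last paragraph glosses over it.
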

\begin{proof}
	The proof follows along similar lines to those of Lemma~\ref{lemma:contraction_vel} and is omitted.
\end{proof}

Lemmas~\ref{lemma:contraction_dens},~\ref{lemma:contraction_vel},~\ref{lemma:contraction_temp},~\ref{lemma:contraction_mag} show that we have contractivity of the mapping $\mathcal{F}$ through reducing, if needed, the final time. This finishes the proof of Theorem~\ref{theorem:main_thm}.

\section*{Conclusion}
In this work, we established the local existence of strong solutions in the $L^p$--$L^q$ class to the non-isothermal Navier--Stokes system coupled to a magnetohydrodynamics model in a bounded domain of class $C^2$ in the challenging case of an inflow boundary. We achieved this result by establishing estimates on a linearized system and employing a Banach fixed theorem. Future work will be tasked to identify appropriate blow-up criteria in the spirit of Nash's principle~\cite{nash1958continuity} for the open compressible Navier--Stokes system of equations.
In contrast to the no-inflow case, blow-up criteria are not known for the case $\bu\cdot n \not \geq 0$ on the boundary.

\section*{Declarations}
This work was supported by the Czech Sciences Foundation (GA\v CR), Grant Agreement 24-11034S. The Institute of Mathematics of the Academy of Sciences of the Czech Republic is supported by
RVO:67985840. \\[0mm]

\noindent The author states that there is no conflict of interest.\\[0mm]

\noindent No data are associated with this article

\section*{Acknowledgements}
 The author is grateful to Prof. Eduard Feireisl (AV\v CR) for suggesting the topic of this manuscript and for his helpful discussions, and to Dr. Anna Abbatiello (U Campania) for helpful suggestions. 

\appendix
\section{Appendix}\label{App:Appendix}
We present in this appendix a technical lemma that shows that the embedding constant of 
\[L^2_{pq} \hookrightarrow C([0,T]; B^{2(1-1/p)}_{qp} (\Omega))\] (see, e.g., \cite[Chapter III, Theorem 4.10.2]{amann1995linear}) can be made non-decreasing in (even independent of)  $T$ if we restrict it to the space $\widetilde{L^2_{pq}} = \{\bu \in L^2_{pq} \ | \ \bu(0) = 0\}$.
\begin{lemma}\label{lemma:indep_for_homog}
	The embedding constant of 
	\begin{equation}
		\widetilde{L^2_{pq}(0,T)} \hookrightarrow C([0,T]; B^{2(1-1/p)}_{qp} (\Omega))
	\end{equation}
	is nondecreasing in $T$. More precisely, let $u \in \widetilde{L^2_{pq}}$
	\begin{equation}
		\|u \|_{C([0,T]; B^{2(1-1/p)}_{qp}} \leq c_p \|u \|^{1-1/p}_{L^p(0,T;W^{2,q}(\Omega))}\|u_t \|_{L^{p}(0,T;L^q(\Omega))}^{1/p}.   ,
	\end{equation}
	where $c_p$ is independent of time $T$.
\end{lemma}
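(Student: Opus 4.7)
The strategy is to reduce the estimate on $[0,T]$ to the translation-invariant trace embedding on the whole real line, where the embedding constant is manifestly $T$-independent. The hypothesis $u(0)=0$ is crucial: it permits extending $u$ by zero for $t<0$ at no cost in the Sobolev norms, which is precisely the ``free'' operation missing in the generic inhomogeneous case.

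First, I would construct a universal extension operator $E\colon \widetilde{L^2_{pq}(0,T)}\to L^p(\R;W^{2,q}(\Omega))\cap W^{1,p}(\R;L^q(\Omega))$ by setting $Eu(t)=u(t)$ for $t\in[0,T]$, $Eu(t)=0$ for $t<0$ (valid because $u(0)=0$), $Eu(t)=u(2T-t)$ for $t\in[T,2T]$ (reflection across $T$, matching $u(T)$ in the trace sense), and $Eu(t)=0$ for $t>2T$. A direct change of variables gives
\begin{equation*}
\|Eu\|_{L^p(\R;W^{2,q}(\Omega))}\leq 2^{1/p}\|u\|_{L^p(0,T;W^{2,q}(\Omega))},\qquad \|(Eu)_t\|_{L^p(\R;L^q(\Omega))}\leq 2^{1/p}\|u_t\|_{L^p(0,T;L^q(\Omega))},
\end{equation*}
with both constants depending only on $p$.

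Next, I would invoke the translation-invariant trace embedding on the real line (for instance \cite[Ch.~III, Thm.~4.10.2]{amann1995linear} applied on $\R$, or directly via the Fourier characterization of vector-valued Besov spaces in the time variable): for every $v\in L^p(\R;W^{2,q}(\Omega))\cap W^{1,p}(\R;L^q(\Omega))$,
\begin{equation*}
\sup_{t\in\R}\|v(t)\|_{B^{2(1-1/p)}_{qp}(\Omega)}\leq \tilde c_p\,\|v\|_{L^p(\R;W^{2,q})}^{1-1/p}\|v_t\|_{L^p(\R;L^q)}^{1/p},
\end{equation*}
where $\tilde c_p$ depends only on $p$. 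The multiplicative Gagliardo--Nirenberg form follows from the additive form $\sup\|v(t)\|\leq \tilde c_p(\|v\|_{L^pW^{2,q}}+\|v_t\|_{L^pL^q})$ by rescaling time $v(\cdot)\mapsto v(\lambda\cdot)$ and minimizing in $\lambda>0$ --- a manipulation valid precisely because the domain is all of $\R$, so that the resulting constant is free of any interval length. Applying this to $v=Eu$ and using $\sup_{[0,T]}\|u(t)\|_{B^{2(1-1/p)}_{qp}}\leq\sup_\R\|Eu(t)\|_{B^{2(1-1/p)}_{qp}}$ then yields the claimed inequality with $c_p=\tilde c_p\cdot 2^{1/p}$.

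The main obstacle is justifying that the whole-line embedding constant is genuinely interval-free: without the condition $u(0)=0$, one would have to lift the value at $t=0$ through a time-extension operator whose operator norm typically picks up a factor of the form $1+T^{\alpha}$, and that is exactly the $T$-dependence the vanishing initial condition is designed to cancel. Everything else reduces to an elementary change of variables (the reflection bound $2^{1/p}$) and an invocation of a standard Amann-type theorem whose constants are translation invariant.
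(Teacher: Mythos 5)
Your proof is correct and follows a genuinely different route from the paper's. The paper extends $u$ to $\R^+$ via the Amann extension lemma, invokes the trace inequality on $[0,T]$ (whose constant is \emph{a priori} $T$-dependent), uses $u(0)=0$ to replace the full $W^{1,p}$ norm by the seminorm $\|u_t\|_{L^p}$, and then normalizes the constant by a scaling argument that transports the estimate to the reference interval $[0,1]$; the powers of $T$ cancel exactly in the multiplicative inequality. You instead exploit $u(0)=0$ \emph{at the start} to build an explicit extension to all of $\R$ (zero on $(-\infty,0)$, reflection on $[T,2T]$, zero beyond), with trackable cost $2^{1/p}$, and then invoke the trace embedding on the full line, where translation invariance makes the constant interval-free by construction and a dilation $t\mapsto\lambda t$ converts the additive bound to the multiplicative one. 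The net effect is the same, but your argument sidesteps the question of whether the extension operator's norm depends on $T$ (which the paper instead neutralizes by scaling), and your constants are completely explicit. One small remark: your reflection step implicitly uses $p>1$ so that $W^{1,p}(0,T;L^q)\hookrightarrow C([0,T];L^q)$ and the pieces match continuously at $t=0$, $t=T$, and $t=2T$; this is consistent with the paper's standing hypothesis $p\in(1,\infty)$, but it is worth stating.
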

\begin{proof}
	Let $u\in L^2_{pq}(0,T)$ and let $\tilde{u} \in L^2_{pq}(\R^+)$ be a regular extension of $u$ to the positive real axis using Lemma~\cite[Lemma 4.10.1]{amann1995linear}. 
	Then \begin{equation}
		\|u(0)\|_{B_{qp}^{2(1-1/p)}(\Omega)} \leq C_{T,p} \|u \|^{1-1/p}_{L^p(0,T;W^2q(\Omega))}\|u \|_{L^p(0,T;W^2q(\Omega))}^{1/p},
	\end{equation}
	where in we have used the convexity of the interpolation inequality \cite[p. 72]{bergh2012interpolation} and the equivalence of the definition of interpolation spaces via ``espaces de moyennes'' and ``espaces de traces''~\cite[Chapter 5]{grisvard1969equations}.
	By translation invariance in $\R^+$, we show that
	\begin{equation}\label{convex}
		\|u\|_{C([0,T];B_{qp}^{2(1-1/p)}(\Omega)} \leq C_{T,p} \|u \|^{1-1/p}_{L^p(0,T;W^{2,q}(\Omega))}\|u \|_{W^{1,p}(0,T;L^q(\Omega))}^{1/p}.
	\end{equation}
	When $u\in \widetilde{L^2_{pq}(0,T)}$, then one can easily see that:
	\begin{equation}\label{eq:elementary}
		\|u \|_{W^{1,p}(0,T;L^q(\Omega))}^{1/p} \sim \|u_t \|_{L^{p}(0,T;L^q(\Omega))}^{1/p}. 
	\end{equation}
	
	To show that the embedding constant can be made independent of time, we use, in what follows, the interval $[0,1]$ as a reference interval and provide a scaling argument. 
	Let now $u\in \widetilde{L^2_{pq}(0,T)}$, and define 
	\begin{equation}
		v: s \in[0,1] \mapsto u(T s),
	\end{equation}
	then  $v\in \widetilde{L^2_{pq}(0,1)}$. Furthermore, elementary calculations yield that
	\begin{equation}
		\begin{aligned}
			v_t(s) &= T u_t(T s),\\
			\|v\|_{L^p(0,1;W^{2,q}(\Omega))} &= T^{-1/p}\|u\|_{L^p(0,T;W^{2,q}(\Omega))},\\
			\|v_t\|_{L^p(0,1,L^q(\Omega))} &= T^{1-1/p} \|u_t\|_{L^p(0,T;L^q(\Omega))},
		\end{aligned}
	\end{equation}
	thus, plugging into \eqref{convex} and using the norm equivalence~\eqref{eq:elementary} yields the desired result.
\end{proof}

We finally, show how Lemma~\ref{lemma:indep_for_homog} can be used to write an inequality with constants that are non-decreasing with time. This was used in the self-mapping and contraction proofs; see, e.g., Proposition~\ref{prop::self_mapping}.

\begin{lemma}\label{lemma:increasing_in_time_embedding} 
	Let $\overline{T}>0$ and let $T\in(0,\overline T].$
	Let $u \in L^2_{pq}(0,T)$
	\begin{equation}
		\|u \|_{C([0,T]; B^{2(1-1/p)}_{qp}} \leq \lambda(\overline T)\|u_0\|_{B^{2(1-1/p)}_{qp}} + c_p \|u \|^{1-1/p}_{L^p(0,T;W^{2,q}(\Omega))}\|u_t \|_{L^{p}(0,T;L^q(\Omega))}^{1/p},
	\end{equation}
	where $c_p$ and  $\lambda$ are independent of $T$.
\end{lemma}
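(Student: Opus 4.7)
The strategy is to reduce to the homogeneous case $u(0)=0$ already handled by Lemma~\ref{lemma:indep_for_homog}. I would decompose $u=w+v$, where $w$ is a fixed lift of the initial datum $u_0$ on the reference interval $[0,\overline T]$ and $v:=u-w$ vanishes at $t=0$. For $w$ take any right-inverse of the trace operator $L^2_{pq}(0,\overline T)\ni f\mapsto f(0)\in B^{2(1-1/p)}_{qp}(\Omega)$ (for instance the heat-semigroup extension of $u_0$ with compatible boundary conditions on $\partial\Omega$, or the construction of~\cite[Chapter III, Theorem 4.10.2]{amann1995linear}). By construction,
\[
\|w\|_{L^p(0,\overline T;W^{2,q})} + \|w_t\|_{L^p(0,\overline T;L^q)} + \|w\|_{C([0,\overline T];B^{2(1-1/p)}_{qp})} \leq \Lambda_0(\overline T)\,\|u_0\|_{B^{2(1-1/p)}_{qp}},
\]
with $\Lambda_0(\overline T)$ depending only on $\overline T,p,q,\Omega$ and crucially not on $T$.

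Since $v(0)=0$, i.e.\ $v\in\widetilde{L^2_{pq}(0,T)}$, Lemma~\ref{lemma:indep_for_homog} yields $\|v\|_{C([0,T];B^{2(1-1/p)}_{qp})}\leq c_p\|v\|^{1-1/p}_{L^p(0,T;W^{2,q})}\|v_t\|^{1/p}_{L^p(0,T;L^q)}$ with $c_p$ independent of $T$. Setting $A:=\|u\|_{L^p(0,T;W^{2,q})}$ and $B:=\|u_t\|_{L^p(0,T;L^q)}$, the triangle inequality in $C([0,T];B^{2(1-1/p)}_{qp})$ combined with $\|w\|_{L^p(0,T;\cdot)}\leq\|w\|_{L^p(0,\overline T;\cdot)}\leq\Lambda_0(\overline T)\|u_0\|$ (valid since $T\leq\overline T$) gives
\[
\|u\|_{C([0,T];B^{2(1-1/p)}_{qp})} \leq \Lambda_0(\overline T)\|u_0\| + c_p\,(A+\Lambda_0\|u_0\|)^{1-1/p}(B+\Lambda_0\|u_0\|)^{1/p}.
\]
Expanding the product via subadditivity $(x+y)^r\leq x^r+y^r$ (valid for $r\in(0,1)$, applied with $r=1-1/p$ and $r=1/p$) extracts the main term $c_pA^{1-1/p}B^{1/p}$ and three cross terms involving $\|u_0\|$; weighted AM--GM $x^ry^s\leq rx+sy$ ($r+s=1$) applied to the mixed cross terms $A^{1-1/p}(\Lambda_0\|u_0\|)^{1/p}$ and $(\Lambda_0\|u_0\|)^{1-1/p}B^{1/p}$ then collects all $\overline T$-dependent contributions into the coefficient $\lambda(\overline T)$ of $\|u_0\|$.

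The delicate point is the final absorption step. Weighted AM--GM applied to the mixed cross terms produces first-power contributions $\sim A$ and $\sim B$ which are a priori \emph{not} dominated by the product $A^{1-1/p}B^{1/p}$ on the right-hand side of the target inequality. Resolving this cleanly requires applying Young's inequality with a small parameter $\epsilon$ (absorbing a fraction $\epsilon c_p$ of the putative $A$, $B$ first-power terms against a slightly enlarged product $c_p A^{1-1/p}B^{1/p}$ after using the trivial bound $A\leq A^{1-1/p}\cdot A^{1/p}$ and comparing $A^{1/p}$ to $B^{1/p}+$ data-dependent quantities via the equation satisfied by $u$)---a recursive tightening that keeps $c_p$ independent of $T$ while enlarging only $\lambda(\overline T)$. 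This bookkeeping is the main technical obstacle; the preceding splitting and application of Lemma~\ref{lemma:indep_for_homog} are routine.
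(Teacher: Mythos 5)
Your decomposition $u=w+v$, with $w$ a parabolic lift of $u_0$ on the reference interval $[0,\overline T]$ and $v=u-w\in\widetilde{L^2_{pq}(0,T)}$, followed by Lemma~\ref{lemma:indep_for_homog} applied to $v$ and the triangle inequality, is exactly the route taken in the paper (which uses the heat extension $\partial_t w-\Delta_x w=0$, $w(0)=u_0$, $w|_{\partial\Omega}=0$ and the maximal regularity estimate $\|w\|_{L^2_{pq}(0,\overline T)}\le C_{\overline T}\|u_0\|_{B^{2(1-1/p)}_{qp}}$). So on the structural level you are doing the same thing.

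You are also right to flag the final absorption as the delicate point, and this is a genuine gap---one that the paper's own proof passes over with the single phrase ``use the triangle inequality.'' After expanding $(A+\Lambda_0\|u_0\|)^{1-1/p}(B+\Lambda_0\|u_0\|)^{1/p}$ by subadditivity (with $A=\|u\|_{L^p(0,T;W^{2,q})}$, $B=\|u_t\|_{L^p(0,T;L^q)}$), the cross terms $A^{1-1/p}(\Lambda_0\|u_0\|)^{1/p}$ and $(\Lambda_0\|u_0\|)^{1-1/p}B^{1/p}$ cannot be dominated by $\lambda(\overline T)\|u_0\|+c_pA^{1-1/p}B^{1/p}$: any Young-type splitting of, say, $A^{1-1/p}(\Lambda_0\|u_0\|)^{1/p}$ produces a first power of $A$, and a first power of $A$ is simply not present on the target right-hand side (consider $u_t\equiv 0$, so $B=0$, with $\|u_0\|_{W^{2,q}}$ large but $\|u_0\|_{B^{2(1-1/p)}_{qp}}$ fixed: $A$ is unboundedly large while the target right-hand side is finite, so $A^{1-1/p}(\Lambda_0\|u_0\|)^{1/p}$ is not controlled). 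Your proposed ``recursive tightening,'' in which you compare $A^{1/p}$ to $B^{1/p}$ plus data-dependent quantities ``via the equation satisfied by $u$,'' is not a rigorous argument: the lemma makes no structural assumption on $u$ beyond membership in $L^2_{pq}(0,T)$, so there is no equation to appeal to, and no such comparison is available.

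The clean resolution is to weaken the conclusion from the interpolation-product form to the additive form obtained by one application of Young's inequality \emph{before} expanding: from $\|v\|_{C}\le c_p\|v\|^{1-1/p}_{L^pW^{2,q}}\|v_t\|^{1/p}_{L^pL^q}\le c_p\big(\|v\|_{L^pW^{2,q}}+\|v_t\|_{L^pL^q}\big)$ one gets, after $\|v\|\le\|u\|+\|w\|$ and $\|w\|_{L^2_{pq}(0,T)}\le\Lambda_0(\overline T)\|u_0\|$,
\begin{equation}
\|u\|_{C([0,T];B^{2(1-1/p)}_{qp})}\le\lambda(\overline T)\,\|u_0\|_{B^{2(1-1/p)}_{qp}}+c_p\Big(\|u\|_{L^p(0,T;W^{2,q})}+\|u_t\|_{L^p(0,T;L^q)}\Big),
\end{equation}
with $c_p$ and $\lambda$ independent of $T$. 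This is weaker than the stated conclusion but it is what the proof route actually delivers, and it is all that is used in the application (Proposition~\ref{prop::self_mapping} only needs time-monotone embedding constants). If you want to retain the interpolation-product form on the right-hand side, you would need to keep $\|u-w\|$-norms there rather than $\|u\|$-norms, i.e.\ state the conclusion with $\|u-w\|^{1-1/p}_{L^pW^{2,q}}\|(u-w)_t\|^{1/p}_{L^pL^q}$.
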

\begin{proof}
	First, we start by extending the initial data to $[0,\overline T ]\times \overline\Omega$ by considering the solution to the parabolic equation:
	\begin{equation}
		\begin{aligned}
			&\pdt w - \Delta_x w = 0 && \textrm{on }\  (0,\overline T) \times \Omega,\\
			&w = 0 && \textrm{on }\  (0,\overline T)\times \partial \Omega,\\
			& w(0)  = u_0&& \textrm{on }\ \Omega .
		\end{aligned}
	\end{equation}
	where $[0,\overline T]$ is an arbitrary finite interval such that $T \leq \overline T$.
	Thus by \cite[Theorem 2.3]{denk2007optimal}, we have the estimate
	\begin{equation}
		\|w\|_{L^2_{pq}} \leq C_{\overline T} \|u_0\|_{B^{2(1-1/p)}_{qp}}.
	\end{equation}
	We apply to the difference $u-w$ Lemma~\ref{lemma:indep_for_homog}, and use the triangle inequality to obtain the desired estimate.
\end{proof}

\end{document}